\documentclass[12pt,reqno]{article}
\usepackage[ansinew]{inputenc}
\usepackage{amsmath,amsthm,amsfonts,amssymb,amscd,epsfig,graphicx}

\usepackage{amssymb,latexsym}
\usepackage{graphicx,color}
%
%
%
%
%
%
%
\setlength{\topmargin}{-1.0 in} \setlength{\textwidth}{6,75 in}
\setlength{\oddsidemargin}{-0.35 in} \setlength{\evensidemargin}{0
in} \setlength{\textheight}{9.5 in} \setlength{\marginparwidth}{0
in} \setlength{\marginparsep}{0 in}
%
%
%
%
%
%
%
%
%
%

\theoremstyle{plain}

\newtheorem{thm}{\sc {\bf Theorem}}[section]

\newtheorem{lem}{\sc {\bf Lemma}}[section]

\newtheorem{coro}{\sc {\bf Corollary}}[section]

\newtheorem{prop}{\sc {\bf Proposition}}[section]

\theoremstyle{remark}

\newtheorem{rem}[thm]{\sc Remark}

\theoremstyle{definition}

\newtheorem{definition}[thm]{\bf Definition}

\renewcommand{\thefootnote}

\newcommand{\h}{\mathbb H}
\newcommand{\R}{\mathbb R}
\newcommand{\hyp}{{\rm I\!H}}

\newcommand{\tgh}{{\,{\rm tgh}\,}}
\newcommand{\cotgh}{{\,{\rm cotgh}\,}}

\font\sc=cmcsc10

\begin{document}
\title{\bf Hypersurfaces with $H_{r+1}=0$ in ${\mathbb H}^n\times{\mathbb R}$ }
\author{Maria Fernanda Elbert, Barbara Nelli,  Walcy Santos}
\date{}

\maketitle

\footnote{ 2000 Mathematical Subject Classification: 53C42, 53A10.}

\begin{abstract}
We prove the existence of rotational  hypersurfaces in $\h^n\times\R$ with $H_{r+1}=0$ and we classify them.
Then we prove   some  uniqueness theorems  for $r$-minimal hypersurfaces  with a given  (finite or asymptotic)  boundary.  In particular, we obtain a Schoen-type Theorem for two ended complete hypersurfaces.

\end{abstract}

\section*{Introduction}

In this article we  deal with $r$-minimal hypersurfaces in $\h^n\times\R,$ that is hypersurfaces in $\h^n\times\R$ with $H_{r+1}=0.$

First we address the  problem of finding  all $r$-minimal  hypersurfaces in $\h^n\times\R$   invariant by rotation with respect to a vertical axis.
We prove that there is   a one parameter family of them and that their  behavior  is very similar to that of  catenoids in $\h^n\times\R$ obtained in   Pierre B\'erard and Ricardo Sa Earp \cite{B-SE} (Theorem \ref{classification-thm}).

 Once proved the existence of this family of examples, we   prove some  rigidity results for $r$-minimal hypersurfaces spanning a fixed boundary or asymptotic boundary.   In particular, we obtain  classification results  provided either the boundary or the asymptotic boundary is contained in two parallel slices  (Theorems \ref{uniqueness-theo} and  \ref{uniqueness-cat}).
For the precise definition of asymptotic boundary,  see   the end of Section \ref{preliminaries}.
  Theorem \ref{uniqueness-theo} is inspired  by the results of Jorge Hounie and Maria Luiza Leite \cite{HL3} for $r$-minimal hypersurfaces in Euclidean space. Theorem  \ref{uniqueness-cat} is what 
  we call a Schoen-type result. In his classical paper \cite{S}, R. Schoen characterizes the minimal complete hypersurfaces which are regular at infinity and have two ends. This result was generalized for $r$-minimal hypersurfaces of the Euclidean space by Levi Lopes de Lima and Antonio Sousa \cite{LS} and also by Maria Luiza Leite and Henrique Ara\'{u}jo \cite{AL}. A Schoen-type result for minimal hypersurfaces in $\h^n\times\R$ was obtained by the second author, Ricardo Sa Earp and Eric Toubiana in \cite{NST}. Our Theorem \ref{uniqueness-cat} is a generalization of the latter. In Euclidean space, the proofs in \cite{S}, \cite{HL3}, \cite{LS} and \cite{AL} use the invariance of the minimality (or r-minimality) condition under ambient space scaling. The geometry of $\h^n\times\R$ obliges one to look for suitable strategies or assumptions in order to obtain the corresponding results. The reader will find more details and comments throughout the text. 
%

 We recall that when working with $H_{r+1}=0$ we are lead to use  a version of the maximum principle
different from the one used for  classical minimal hypersurfaces.
In fact, here, ellipticity is not for free and  one has to add some hypothesis on the principal curvatures  vector  (see Section \ref{uniqueness-section}). One of the consequence of this fact is that we must assume embeddedness in Theorem \ref{uniqueness-cat}, that is for free in the
mean curvature case.

 Hypersurfaces  with $H_{r+1}=0$  in $\R^{n+1}$ have been broached in several papers. We refer the reader to \cite{AL}, \cite{HL1}, \cite{HL2}  and \cite{LS} and the references therein.

The paper is organized as follows. In the first section we fix notations. The second  section is devoted to the classification of $r$-minimal hypersurfaces invariant by rotations and to the establishment of their properties. In section three, we establish our  uniqueness results for
$r$-minimal  hypersurfaces  with either (finite) boundary or asymptotic boundary contained in two parallel slices.

\section{Preliminaries}
\label{preliminaries}

  Let $M^n,$  $\bar{M}^{n+1}$ be  oriented Riemannian manifolds of dimension $n$ and $n+1$ respectively and  let  $X:M^n\to \bar{M}^{n+1}$  be an isometric immersion.  Let $A$ be the linear operator associated to the second fundamental form of $X$ and  $k_1,...,k_ n$ be its eigenvalues. The $r$-mean curvature  $H_{r+1}$ of $X$ is given by
$${\binom n{r+1}}H_{r+1}=\sum_{i_1<...<i_{r+1}}k _{i_1}...k_{i_{r+1}},\; 1\leq r+1\leq n.$$

We recall that $H_1$ ($r=0$) is the mean curvature of the immersion and that $H_n$ ($r+1=n$) is the Gauss-Kronecker curvature. The Newton tensors associated to $X$ are inductively defined by
$$\begin{array}{ll}
P_0= & I,\\
P_{r+1}= & {\binom n{r+1}}H_{r+1}I-A\circ P_{r},\;r>0.
\end{array}$$

For further details about the Newton tensors, see \cite{Re}, \cite{Ro}.
We are  interested in the case  where  $\bar{M}^{n+1}=\hyp^n\times \R$, where $\hyp^n$ denotes the hyperbolic $n$-space and $H_{r+1}=0,$ for some $r.$

\

We use the ball model of the hyperbolic space $\hyp^n$ ($n\geq 2$), i.e.
$$
\hyp^n=\{x=(x_1,\ldots,x_n)\in\R^n|x_1^2+\ldots+x_n^2\leq 1\}
$$
endowed with the metric
$$
g_\hyp:=\frac{dx_1^2+\ldots+dx_{n}^2}{\left( \frac{1-|x|^2}{2}\right )^2}
$$

 In $\hyp^n\times \R$, with coordinates $(x_1,\ldots,x_n,t)$, we consider the product metric $$g_\hyp+dt^2.$$

For later use, we  briefly recall  the notion of asymptotic boundary of a hypersurface. We denote the ideal  boundary of $\h^n\times\R$ by $\partial_{\infty}(\h^n\times\R).$

Since we are using the ball model for $\h^n,$
$\partial_{\infty}(\h^n\times\R)$ is naturally  identified with the cylinder $S^{n-1}_1\times\R$ joined with the endpoints of  all the non horizontal
geodesic of  $\h^n\times\R.$ Here, $S^{n-1}$ denotes the unitary (n-1)-dimensional sphere.

The {\em asymptotic boundary} of a hypersurface $M$ in $\h^n\times\R$ is the set of the limit points of $M$
in $\partial_{\infty}(\h^n\times \R)$ with respect to the Euclidean topology of $S^{n-1}\times\R.$
The asymptotic   boundary of the surface $M$ will be denoted by $\partial_{\infty}M,$  while the usual (finite)
boundary of $M$  will be denoted by $\partial M.$

\section{ $r$-minimal rotational hypersurfaces}

 Our aim in this section is to classify the $r$-minimal  hypersurfaces in $\hyp^n\times \R$ invariant by rotation about a vertical axis.
 In $\hyp^n\times \R,$ we consider  the coordinates $(x_1,\ldots,x_n,t)$  and, up to isometry, we can assume the rotation axis to be
 $\{0\}\times \R$.   Notice that the slices $t=const$ are $r$-minimal hypersurfaces invariant by rotation for any $r.$

 We consider a hypersurface obtained by the rotation of a regular  curve in the vertical plane $V:=\{(x_1,\ldots,x_n,t)\in\hyp^n\times \R|x_1=\ldots=x_{n-1}=0\},$ parametrized by  $(\tanh(\frac{f(t)}{2}),t),$  where  $f$ is a positive function.

We define a {\it rotational hypersurface} in $\hyp^n\times \R$  by the parametrization
$$
X:\left\{\begin{array}{rcl}
\R \times S^{n-1}&\to & \hyp^n\times \R \\[5pt]
(t,\zeta)\;\;\;\;&\to& (\tanh(f(t)/2)\zeta,t).
\end{array}\right.
$$
The normal field to the immersion can be chosen to be

\begin{equation}
\label{normal-vector-rot}
N=(1+f_{t} ^2 (t))^{-1/2}\left(\frac{-1}{2\cosh^2(f(t)/2)}\zeta,f_t (t)\right)
\end{equation}

\

\noindent and the principal curvatures associated to $X$ are then given  by (see \cite{B-SE})
 $$k_1=k_2=...=k_{n-1}=\cotgh(f(t))(1+f_{t} ^2 (t))^{-1/2}\;\;\mbox{and}\;\;k_n=-f_{tt}(t)(1+f_{t} ^2 (t))^{-3/2}.$$

\

\noindent We set $q=\frac{n-r-1}{r+1}$ and a straightforward computation yields

\begin{equation}
 (q+1)H_{r+1}=-\cotgh^{r}(f(t))f_{tt}(t)(1+f_{t} ^2 (t))^{-\frac{r+3}{2}}+q\cotgh^{r+1}(f(t))(1+f_{t} ^2 (t))^{-\frac{r+1}{2}}
\label{ed1}
\end{equation}

or, equivalently,

\begin{equation}
 (q+1)f_{t}(t)(1+f_{t} ^2 (t))^{\frac{r}{2}}\frac{\sinh^{q+r}(f(t))}{\cosh^{r}(f(t))}H_{r+1}=\frac{\partial }{\partial t}\left[\sinh^{q}(f(t))\left(1+f_{t} ^2 (t)\right )^{-\frac{1}{2}}\right].
\label{ed2}
\end{equation}

The solutions of  either  \eqref{ed1} or \eqref{ed2}  with  $H_{r+1}=0$ will be   the profile of the  $r$-minimal hypersurfaces invariant by rotation.

\

We state below our classification result. We point out that in the statement we discard  the slices, that are $r$-minimal for each $r$.

\begin{thm}
\label{classification-thm}
The  $r$-minimal complete  hypersurfaces invariant by rotation in $\h^n\times\R$ are the following:

\begin{itemize}
 \item [a)] For $n=r+1:$  right cylinders above spheres of dimension $n-1.$
 \item [b)] For $r+1<n:$  a  one parameter family  $\{{\mathcal M}_a(r)\}_{a>0}$ of  hypersurfaces with the following properties.
 Any ${\mathcal M}_a(r)$ is  embedded and homeomorphic
 to an annulus symmetric with respect to the slice $t=0$. The distance between the rotational axis and the
 \textquotedblleft neck\textquotedblright of ${\mathcal M}_a(r)$ is $a$. The asymptotic boundary of
${\mathcal M}_a(r)$ is composed by two horizontal circles in $\partial_\infty(\hyp)\times \R$ whose vertical distance
 is an increasing function of $a,$ taking values   in  $\left(0 ,\frac{(r+1)\pi}{(n-r-1)}\right).$
 Moreover, if $a\not=b$ then the generating curves of ${\mathcal M}_a(r)$ and ${\mathcal M}_b(r)$ intersect exactly at two symmetric points.
\end{itemize}

\end{thm}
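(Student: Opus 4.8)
The plan is to reduce everything to the profile ODE. Integrating \eqref{ed2} with $H_{r+1}=0$ produces the first integral
\[
\sinh^{q}(f(t))\,(1+f_{t}^{2}(t))^{-1/2}=c
\]
for a constant $c\ge 0$. In case (a) we have $n=r+1$, so $q=0$ and \eqref{ed1} reduces to $H_{r+1}=-\cotgh^{r}(f)f_{tt}(1+f_{t}^{2})^{-(r+3)/2}$; hence $H_{r+1}=0$ is equivalent to $f_{tt}=0$, i.e. $f$ is affine. Since $f$ must remain positive and the hypersurface complete, a non-constant affine $f$ would force the profile to reach the rotation axis at a finite parameter value and transversally (its tangent is never radial), producing a non-smooth point. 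Thus $f$ is constant and $\mathcal M$ is a vertical right cylinder over a geodesic sphere $S^{n-1}\subset\hyp^n$, for which $k_{n}=0$ and therefore $H_{n}=0$.

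For case (b), $q>0$ and $c>0$. From the first integral, $f_{t}^{2}=\sinh^{2q}(f)/c^{2}-1$, which is admissible precisely when $\sinh(f)\ge c^{1/q}$, with equality at the minimum of $f$. Normalizing the minimum to occur at $t=0$ and writing $a$ for that minimal value, which by the preliminaries is the hyperbolic distance from the axis to the neck, gives $c=\sinh^{q}(a)$. Because the equation is autonomous and invariant under $t\mapsto -t$, the solution with $f_{t}(0)=0$ is even, so $\mathcal M_{a}(r)$ is symmetric about the slice $t=0$, and for $t>0$ one has $f_{t}>0$ with $f$ increasing from $a$ to $+\infty$. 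Estimating $\int^{\infty}df/\sinh^{q}(f)<\infty$ shows that $f\to\infty$ as $t$ tends to a finite value $t_{\max}$, so the profile reaches the ideal boundary at finite height. As $t$ recovers the vertical coordinate and $\zeta$ the horizontal direction, $X$ is an injective proper immersion; hence $\mathcal M_{a}(r)$ is embedded and, with $f$ defined on $(-t_{\max},t_{\max})$, homeomorphic to $S^{n-1}\times(-t_{\max},t_{\max})$, an annulus. Since $\tanh(f/2)\to 1$ at height $t\to\pm t_{\max}$, the asymptotic boundary is the pair of horizontal spheres $S^{n-1}\times\{\pm t_{\max}\}$, at vertical distance $2t_{\max}$.

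The main obstacle, and the heart of the argument, is to control $t_{\max}=t_{\max}(a)$ and pin down its range. Writing $dt=c\,df/\sqrt{\sinh^{2q}f-c^{2}}$ and substituting $v=\sinh(a)/\sinh(f)$ converts the height integral into
\[
t_{\max}(a)=\int_{0}^{1}\frac{\sinh(a)\,v^{q-1}}{\sqrt{v^{2}+\sinh^{2}(a)}\,\sqrt{1-v^{2q}}}\,dv .
\]
This is the decisive normal form: the entire $a$-dependence sits in the factor $\sinh(a)/\sqrt{v^{2}+\sinh^{2}(a)}$, whose derivative in $\sinh(a)$ equals $v^{2}/(v^{2}+\sinh^{2}a)^{3/2}>0$, so for each fixed $v\in(0,1)$ the integrand is strictly increasing in $a$; hence $t_{\max}$ is strictly increasing. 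A dominated–convergence argument with integrable majorant $v^{q-1}/\sqrt{1-v^{2q}}$ gives $t_{\max}(a)\to 0$ as $a\to 0$, while as $a\to\infty$ the factor tends to $1$ and the substitution $u=v^{q}$ yields $t_{\max}(a)\to \tfrac{1}{q}\int_{0}^{1}du/\sqrt{1-u^{2}}=\pi/(2q)$. By monotonicity and continuity the vertical distance $2t_{\max}(a)$ therefore sweeps out exactly $\bigl(0,\pi/q\bigr)=\bigl(0,\tfrac{(r+1)\pi}{n-r-1}\bigr)$, as claimed.

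Finally, for $a<b$ I would compare the profiles written as graphs $t=T_{a}(f)$ on $f\ge a$ and $t=T_{b}(f)$ on $f\ge b$ over their common range $f\ge b$, where $T_{c}'(f)=\sinh^{q}(c)/\sqrt{\sinh^{2q}f-\sinh^{2q}c}$. For $f>b>a$ the numerator of $T_{a}'$ is smaller and its denominator larger than those of $T_{b}'$, so $T_{a}'(f)<T_{b}'(f)$ and $T_{a}-T_{b}$ is strictly decreasing on $[b,\infty)$. Since $T_{a}(b)-T_{b}(b)=T_{a}(b)>0$ while $T_{a}(\infty)-T_{b}(\infty)=t_{\max}(a)-t_{\max}(b)<0$, the difference vanishes exactly once; because the necks sit at different radii the curves never meet at $t=0$, so by the symmetry about $t=0$ the generating curves of $\mathcal M_{a}(r)$ and $\mathcal M_{b}(r)$ intersect in exactly two points, symmetric with respect to the slice $t=0$.
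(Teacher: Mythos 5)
Your proof is correct and takes essentially the same route as the paper: the same first integral of \eqref{ed2}, the same height integral with your substitution $v=\sinh(a)/\sinh(f)$ being merely the reciprocal of the paper's $v=\sinh(u)/\sinh(a)$, the same monotonicity computation (your pointwise derivative $v^2/(v^2+\sinh^2 a)^{3/2}$ is exactly the integrand in the paper's $dL/da$) and the same limits $0$ and $\pi/(2q)$. Your final step just makes explicit, via the comparison $T_a'(f)<T_b'(f)$ for $f>b$, the argument the paper only sketches when it invokes $\lambda(b,\rho)-\lambda(a,\rho)$ and the monotonicity of $L(a)$.
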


\begin{proof}

For $n=r+1$, it is easy to see that the solutions of equation (\ref{ed1}) for $H_{r+1}=0$ satisfy $f_t(t)={\rm const}$, that is, they are part of cones or right cylinders. Since we search for complete hypersurfaces, a) is proved.

We now prove b). We first notice that, in order to solve \eqref{ed1} with $H_{r+1}=0$, it is enough to solve
 the following Cauchy problem

\begin{equation}\left\{\begin{array}{lcl}
f_{tt}&=& q\cotgh(f(t))(1+f_{t} ^2 (t))  \\[5pt]
f(0)&=& a \\[5pt]
f_t (0)&=&0,
\label{CP}
\end{array}\right.
\end{equation}

for any $a>0.$

 In fact, we only have to realize that the condition $f_t (0)=0$ is not  restrictive. We recall that the Cauchy-Lipschitz Theorem guarantees the existence of a unique maximal solution for given initial data. Since we are considering $f(t)>0$, a  solution of the equation in (\ref{CP}) satisfy $f_{tt}\geq q>0$. Then, the maximal solution attains a minimum at some point of the corresponding interval. We can, w.l.g., suppose it attains a  minimum at $t=0$ and we are done.

 Let $(I_a,f(a,t))$ be the maximal solution of (\ref{CP}). Since $f(a,-t)$ also solves the equation, we conclude that $f(a,t)$ is an even function of $t,$  and we  can write $I_a=(-L(a),L(a))$ for some $L(a)\in\R^+\cup\{\infty\}$.

By imposing $H_{r+1}=0$ in the equation (\ref{ed2}), integrating and using the initial conditions of the Cauchy problem we obtain

\begin{equation}
\label{CP1}
\frac{\sinh^{q}(f(a,t))} {(1+f_{t} ^2 (a,t))^{\frac{1}{2}}}=\sinh^{q}(a) \ \ \ {\rm for \   all\ }t\in I_a.
\end{equation}
In order to obtain the result, we  explore the geometric properties of the solutions $(I_a,f(a,t)),$  that can be deduced from
  \eqref{CP} and \eqref{CP1}.   Our analysis is inspired by the one  in \cite{B-SE} and \cite{ES}.

Since $f_{tt}(t)>0,$  the profile curve is strictly convex.
Moreover, $f(a,.)$ is greater or equal to $a$ and is increasing on $(0,L(a)).$
As it is a  maximal solution of (\ref{CP}) (and (\ref{CP1})),  $f(a,.)$ must go to infinity for $t\longrightarrow \pm L(a).$
Then, we can define  the inverse function $\lambda(a,\rho)$ for $\rho\in [a,\infty)$ onto $[0,L(a)]$ that  satisfies  $\lambda_\rho(a,f(a,t))f_t(a,t)=1$.
Hence we have

\begin{equation}
 \lambda(a,\rho)=\sinh^q (a)\int_{a}^{\rho} {\frac{1}{\sqrt{\sinh^{2q} (u)-\sinh^{2q}(a)}}}du.
\label{ed3}
\end{equation}

 Setting $v=\frac{\sinh(u)}{\sinh(a)},$  we obtain

\begin{equation}
\label{lambda}
 \lambda(a,\rho)=
 \int_{1}^{\frac{\sinh(\rho)}{\sinh(a)}} (v^{2q}-1)^{-1/2}\sinh(a)(1+v^{2}\sinh^2 (a))^{-1/2}dv.
\end{equation}

Now, we notice that

 \begin{equation}\begin{array}{c}
 \sinh(a)(1+v^{2}\sinh^2 (a))^{-1/2}\leq v^{-1}\\
 \\
{\displaystyle\lim_{a\longrightarrow\infty}  \sinh(a)(1+v^{2}\sinh^2 (a))^{-1/2}= v^{-1}}\\
\end{array} \end{equation}

 and that

 \begin{equation}
 \int v^{-1}(v^{2q}-1)^{-1/2}\;dv=\frac{1}{q}\arctan (v^{2q}-1)^{\frac{1}{2}} +{\rm const}.
 \end{equation}

  \

From the relations above, we obtain  that $\lambda(\rho,a)$  converges at $u=a$ and also when $\rho \to \infty.$ Thus
we can write

 \begin{equation}
 L(a)=\int_{1}^{\infty} (v^{2q}-1)^{-1/2}\sinh(a)(1+v^{2}\sinh^2 (a))^{-1/2}dv.
\end{equation}

Moreover  the limit when $a\longrightarrow \infty$  can be taken under the integral and

\begin{equation}
\lim_{a\to\infty}L(a)=\int_{1}^{\infty} v^{-1}(v^{2q}-1)^{-1/2}dv=\frac{\pi}{2q}=\frac{\pi (r+1)}{2(n-r-1)}.
\end{equation}

 Finally, since

\begin{equation}
\label{L-derivative}
\frac{dL}{da}=\cosh(a)\int_{1}^{\infty} (v^{2q}-1)^{-1/2}(1+v^{2}\sinh^2 (a))^{-3/2}dv>0,
\end{equation}

\noindent we conclude that the function $a\to L(a)$ increases from $0$ to $\frac{\pi (r+1)}{2(n-r-1)}$  when $a$ increases from $0$ to $\infty$.
Since $f(a,t)$ is an even function
of $t,$  we can make a reflection  of the graph of the function $\lambda(\rho,a)$ with respect to the horizontal slice $t=0$ and we obtain a {\em catenary like} curve  with finite height.

 The fact that two  generating curves intersect exactly at two symmetric points follow by considering the function
$\lambda(b,\rho)-\lambda(a,\rho)$ for $a\not=b$ and by using the monotonicity of $L(a)$ (see Figure \ref{picture2}).

\begin{figure}[!ht]
 \centerline{\includegraphics[scale=0.4]{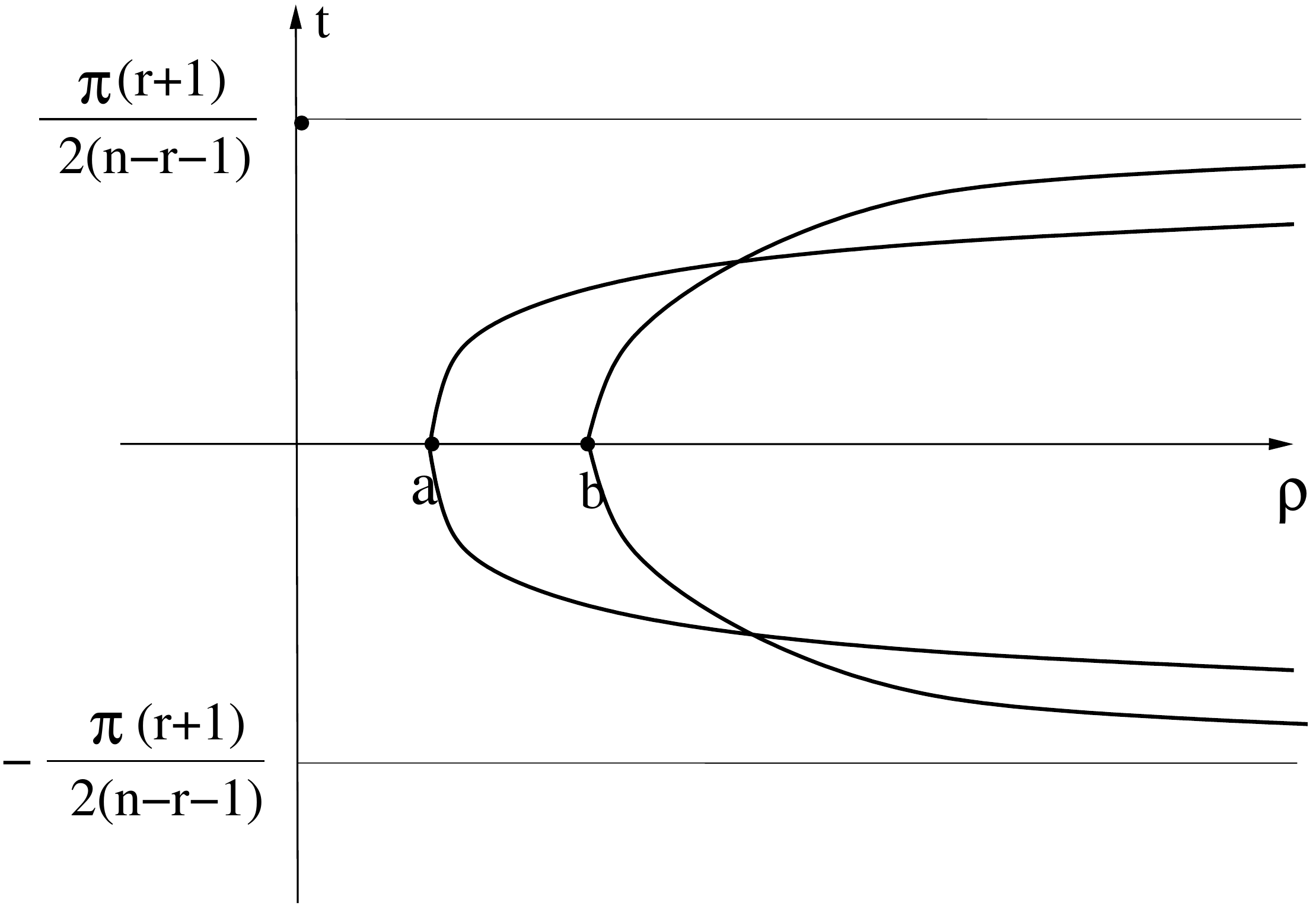}}
 \caption{The profile curves of ${\mathcal M}_a(r)$ and ${\mathcal M}_b(r)$}
 \label{picture2}
 \end{figure}

 With this method we have then found all the complete rotational hypersurfaces that are local graphs over the vertical axis and we are then  able to  conclude that no immersed  examples will appear.

\end{proof}

\begin{definition}
The  elements of the one parameter family  $\{{\mathcal M}_a(r)\}_{a>0}$ of  $r$-minimal complete  hypersurfaces invariant by rotation in $\h^n\times\R$
are called {\em $r$-catenoids}.
\end{definition}

\

In the rest of this section we explore further properties of the family of $r$-catenoids  ${\mathcal M}_a(r)$ (see Figure \ref{picture1}).

 \begin{figure}[!ht]
 \centerline{\includegraphics[scale=0.4]{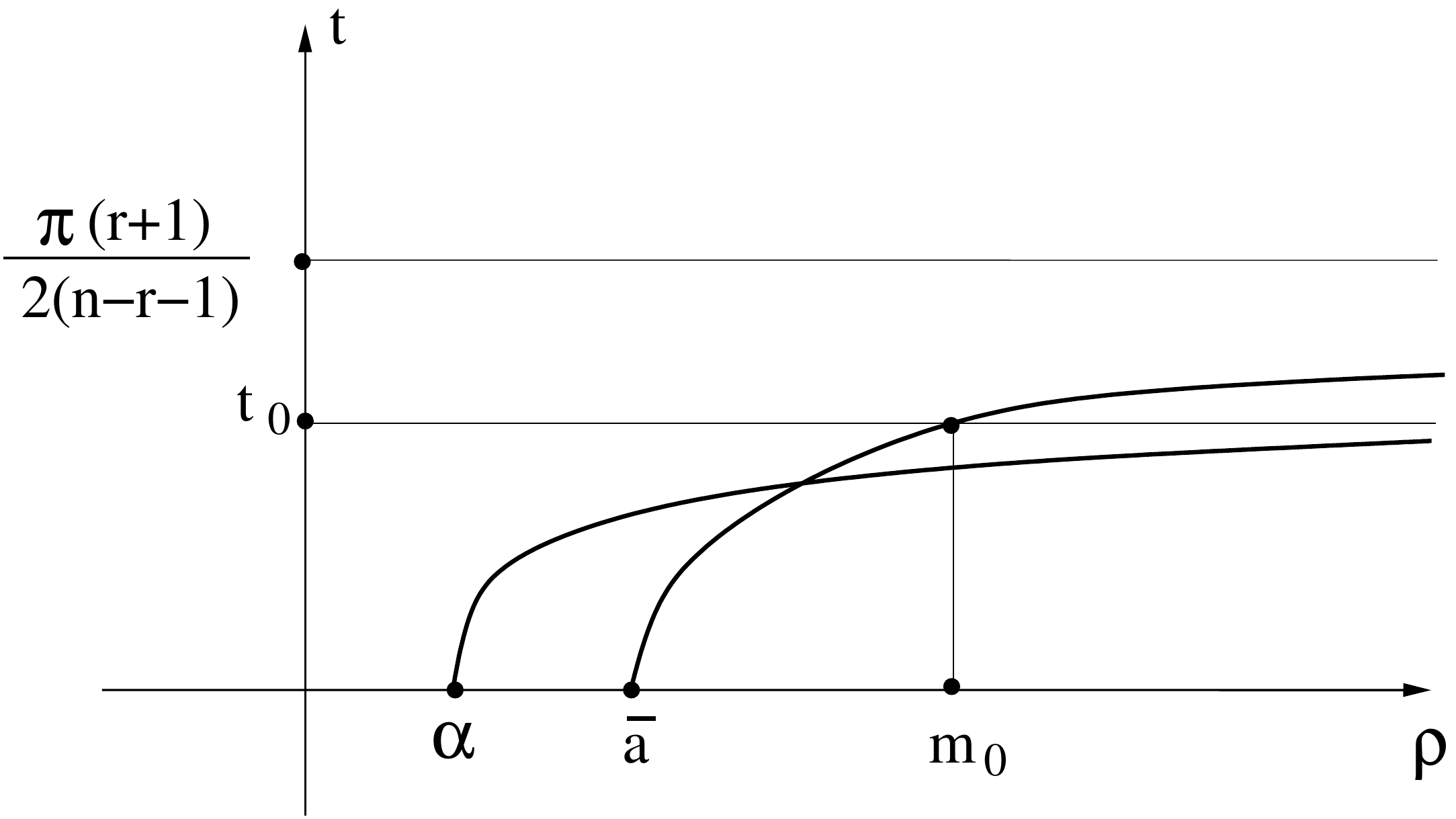}}
 \caption{The minimum for $f.$}
 \label{picture1}
 \end{figure}

Let  us fix $t_0$ in $\left(0, \frac{ \pi (r+1)}{2(n-r-1)}\right)$ and let $\alpha$ be such that $L(\alpha)=t_0.$ This means that
${\displaystyle{\lim_{t\longrightarrow t_0^-}}}f(\alpha,t)=\infty.$ Let $\phi^{t_0}$ be  the positive continuous function defined by $ \phi^{t_0}(a)=f(a,t_0).$
Since, by \eqref{L-derivative}, $\frac{dL}{da}>0,$ we have that $L(a)>L(\alpha)=t_0,$ for any $a>\alpha.$ Then $\phi^{t_0}$ is defined on
$(\alpha, \infty).$ Moreover, $\displaystyle{\lim_{a\longrightarrow \alpha^+}}\phi^{t_0}(a)=\displaystyle{\lim_{a\longrightarrow \infty}}\phi^{t_0}(a)=\infty.$ It is then clear that $\phi^{t_0}$ has a minimum value $m_0$ in $(\alpha, \infty).$ Let $\bar a\in(\alpha,\infty)$ be such that $\phi^{t_0}(\bar a)=m_0.$
Notice that   $f(\bar a,t_0)=m_0$ is a minimum  of $f$ with respect to the variable $a.$

\

\noindent {\bf Claim.}
  $f(\bar a,t_0)=m_0$ is a minimum  of $f$ with respect to the variable $a$ if, and only if,  $\lambda(\bar a,m_0)$ is a maximum of $\lambda,$ with respect to the variable $a.$

\

\noindent {\bf Proof of the claim:}  For, assume that  there exists $\tilde a$ such that $\lambda(\tilde a,m_0)>\lambda(\bar a, m_0).$ Then,
the graph of $\lambda (\tilde a, \rho)$ intersects $t=t_0$ at a point $(\tilde\rho, t_0)$ with $\tilde a<\tilde\rho<m_0.$ Then $f(\tilde a,t_0)=\tilde\rho<m_0=f(\bar a,t_0).$ Contradiction. The proof of the "only if" part is analogous.

\

We now state a technical lemma that will be useful in what follows.

\begin{lem}
\label{lemma-lambda-a}
Let $\lambda(a,\rho)$ be given by \eqref{lambda}. Then we have $\lambda_{aa}(a,\rho)<0$ for $a\in (0,\rho)$ and:
\begin{itemize}
\item [-] $\rho\in (a,\infty)$ if $q\geq 1$.
\item [-]  $\rho\in (0,M]$, where $M={\rm arcosh}\left({\sqrt{\frac{1}{1-q}}}\right)$, if $q<1$.
\end{itemize}
\end{lem}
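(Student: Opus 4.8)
The plan is to differentiate \eqref{lambda} twice in $a$, isolating a manifestly negative interior integral from a boundary term whose sign I then control by a monotonicity argument; the threshold $M$ will emerge precisely from the boundary behaviour as $a\to 0^{+}$. First I record that, by \eqref{CP1} (equivalently, by differentiating \eqref{ed3} in $\rho$), $\lambda_\rho(a,\rho)=\sinh^{q}a\,(\sinh^{2q}\rho-\sinh^{2q}a)^{-1/2}$. Writing $b=b(a)=\sinh\rho/\sinh a$, formula \eqref{lambda} reads $\lambda(a,\rho)=\Lambda(a,b(a))$, where $\Lambda(a,b)=\int_{1}^{b}h(v)\phi(a,v)\,dv$ with $h(v)=(v^{2q}-1)^{-1/2}$ and $\phi(a,v)=\sinh a\,(1+v^{2}\sinh^{2}a)^{-1/2}$. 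Since the upper endpoint $b$ (the only non-singular one) depends on $a$, the chain rule gives
\[
\lambda_{aa}=\Lambda_{aa}+B,\qquad B=2\Lambda_{ab}\,b'+\Lambda_{bb}\,(b')^{2}+\Lambda_{b}\,b'',
\]
where $\Lambda_{aa}(a,b)=\int_{1}^{b}h(v)\phi_{aa}(a,v)\,dv$ is computed with $b$ frozen.

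The interior term is negative. A direct computation gives $\phi_{aa}=\sinh a\,(1+v^{2}\sinh^{2}a)^{-5/2}\big[\,1-v^{2}(2\sinh^{2}a+3)\,\big]$, and on the range of integration $v\ge 1$ one has $v^{2}(2\sinh^{2}a+3)\ge 3>1$, so $\phi_{aa}<0$; since $h>0$ on $(1,b)$, this yields $\Lambda_{aa}<0$. Hence the whole difficulty is concentrated in the boundary term $B$.

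To treat $B$ I would evaluate $\Lambda_{b},\Lambda_{ab},\Lambda_{bb}$ at $v=b$ using $1+b^{2}\sinh^{2}a=\cosh^{2}\rho$, together with $b'=-b\coth a$ and $b''=b(1+2\sinh^{-2}a)$. Grouping the resulting terms by powers of $D:=(\sinh^{2q}\rho-\sinh^{2q}a)^{1/2}$, the $D^{-1}$ bracket collapses (using $\sinh^{2}a-\cosh^{2}a=-1$) to $(\sinh^{2}\rho-\sinh^{2}a)/\cosh^{2}\rho$, leaving the compact form
\[
B=\frac{\sinh^{q-2}a\,\sinh\rho}{D\cosh\rho}\,\mathcal B,\qquad \mathcal B=-\,q\,\frac{\sinh^{2q}\rho\,\cosh^{2}a}{D^{2}}+\frac{\sinh^{2}\rho-\sinh^{2}a}{\cosh^{2}\rho}.
\]
As the prefactor is positive for $0<a<\rho$, it remains to show $\mathcal B\le 0$ in each case.

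Here the key remark is that $P(a):=-\mathcal B$ is increasing in $a$ on $(0,\rho)$: its first summand $q\sinh^{2q}\rho\,\cosh^{2}a\,(\sinh^{2q}\rho-\sinh^{2q}a)^{-1}$ is a product of increasing positive factors, and $-(\sinh^{2}\rho-\sinh^{2}a)/\cosh^{2}\rho$ is increasing as well. Therefore $\inf_{(0,\rho)}P=\lim_{a\to0^{+}}P(a)=q-\tanh^{2}\rho$. If $q\ge 1$ this is $\ge 1-\tanh^{2}\rho>0$ for every $\rho$; if $q<1$ it is $\ge 0$ exactly when $\tanh^{2}\rho\le q$, that is $\rho\le M$, since $\tanh M=\sqrt q$ is equivalent to $\cosh^{2}M=1/(1-q)$. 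In both cases $B\le 0$, and together with $\Lambda_{aa}<0$ this gives $\lambda_{aa}=\Lambda_{aa}+B<0$. I expect the main obstacle to be the algebraic collapse of $B$ to the compact form $\mathcal B$, and the recognition that the sharp threshold $M$ is forced precisely by the boundary limit $a\to 0^{+}$.
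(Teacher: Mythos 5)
Your proposal is correct, and its computational skeleton is the same as the paper's: differentiate \eqref{lambda} twice, isolate the interior integral $\int_1^b h\,\phi_{aa}\,dv<0$, and control the boundary contribution. Indeed, your compact form is algebraically identical to what the paper writes: with $b=\sinh\rho/\sinh a$, the bracket appearing in the paper's expression for $\lambda_{aa}$, namely $b^{2q}\bigl(1-q\cosh^2 a-\frac{\cosh^2 a}{\cosh^2\rho}\bigr)+\bigl(\frac{\cosh^2 a}{\cosh^2\rho}-1\bigr)$, equals $(b^{2q}-1)\,\mathcal B$, and your identities $b'=-b\coth a$, $b''=b(1+2\sinh^{-2}a)$, $1+b^2\sinh^2 a=\cosh^2\rho$ all check out. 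Where you genuinely diverge is in the sign argument for the boundary term. The paper keeps the two summands of the bracket separate and verifies each is negative; for $q<1$, $\rho\le M$ this uses $\cosh^{-2}\rho\ge 1-q$, whence $q\cosh^2 a+\cosh^2 a/\cosh^2\rho\ge\cosh^2 a>1$, so $M$ enters as an ad hoc sufficient condition. You instead collapse the bracket into the single quantity $\mathcal B$ and show $-\mathcal B$ is strictly increasing in $a$ with $\lim_{a\to 0^+}(-\mathcal B)=q-\tanh^2\rho$; this is a valid argument (both summands of $-\mathcal B$ are indeed increasing, the first strictly), it yields $\mathcal B<0$ on the stated ranges, and it buys a conceptual dividend the paper's termwise check does not: the threshold $M$ is seen to be exactly the value of $\rho$ at which the limiting boundary behaviour $q-\tanh^2\rho$ changes sign, i.e.\ the sharp point beyond which this estimate fails as $a\to 0^+$.

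One further remark in your favor: your interior factor $\phi_{aa}=\sinh a\,(1+v^2\sinh^2 a)^{-5/2}\bigl(1-v^2(2\sinh^2 a+3)\bigr)$ is the literal second derivative of the integrand, whereas the paper prints the factor $(1-v^2-2\cosh^2 a)$; the two agree at $v=1$ but differ by $2(v^2-1)\cosh^2 a$ for $v>1$, so the paper's printed integrand appears to contain a slip. Since both expressions are negative for $v\ge 1$, the lemma's conclusion is unaffected either way, but your version is the correct one.
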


\begin{proof}

By a straightforward computation, we obtain

\begin{align*}
\lambda_a(a,\rho)&=-\tgh(\rho) \cotgh(a)\left(\left(\frac{\sinh(\rho)}{\sinh(a)}\right)^{2q}-1\right)^{-\frac{1}{2}}
\notag\\
&+\cosh(a)\int_1^{\left(\frac{\sinh(\rho)}{\sinh(a)}\right)}
(v^{2q}-1)^{-\frac{1}{2}}(1+v^{2}\sinh^2(a))^{-\frac{3}{2}}dv\\
\end{align*}

and

\begin{align*}
\lambda_{aa}(a,\rho)&=\frac{\tgh(\rho)}{\sinh^2 (a)}\left(\left(\frac{\sinh(\rho)}{\sinh(a)}\right)^{2q}-1 \right)^{-\frac{3}{2}}.\\[8pt]
&\left[ \left(\frac{\sinh(\rho)}{\sinh(a)}\right)^{2q}\left(1-q\cosh^2(a)-\frac{\cosh^2(a)}{\cosh^2(\rho)}\right) +\left( \frac{\cosh^2(a)}{\cosh^2(\rho)} -1\right) \right]
\notag\\[8pt]
&+\sinh(a)\int_1^{\left(\frac{\sinh(\rho)}{\sinh(a)}\right)}
(v^{2q}-1)^{-\frac{1}{2}}(1+v^{2}\sinh^2(a))^{-\frac{5}{2}}(1-v^2-2\cosh^2(a))dv.\\
\end{align*}

It is easy to see that, under the assumptions, the term $\left(1-q\cosh^2(a)-\frac{\cosh^2(a)}{\cosh^2(\rho)}\right)$ is negative. The remainder terms are clearly negative for $a\in (0,\rho)$.
\end{proof}

\

For any fixed $\rho,$ let $\gamma^{\rho}(a)=\lambda(a,\rho).$

 We can easily see that $\gamma^{\rho}$ is  defined, positive  and continuous in $(0,\rho),$ that
${\displaystyle{\lim_{a\longrightarrow 0^+}}}\gamma^{\rho}(a)=0$ and that $\gamma^{\rho}(\rho)=0.$ Hence, $\gamma^{\rho}$ reaches a maximum at some $a$ in $(0,\rho).$ Set
\begin{itemize}
\item [-] $J_q=(0,\infty)$ if $q\geq 1$
\item [-] $J_q=(0,M]$ if $q<1$.
\end{itemize}
 Lemma \ref{lemma-lambda-a} guarantees that for each $\rho \in J_q$, $\gamma^{\rho}(a)=\lambda(a,\rho)$ has a unique point of maximum.

\noindent When $q<1$, let $A$ be the unique point of maximum of  $\lambda(a,M)$, for $a\in(0,M)$.  We set
\begin{itemize}
\item [-] $T= \frac{ \pi (r+1)}{2(n-r-1)}$ if $q\geq 1$
\item [-]$T=\lambda(A,M)$ if $q<1$.
\end{itemize}

%
%
%
%
%
%
%
%
%

%

\begin{coro}
\label{uniqueness}
For each $t_0\in\left(0, T\right)$, there exists a unique $a_0\in(\alpha,\infty)$ such that $m_0=\phi^{t_0}( a_0)$ is the minimum of $\phi^{t_0}.$ Moreover, for each $\rho> m_0,$ there
exists at least a pair $(a_1,a_2)$, with $a_1<a_0<a_2$, such that $\phi^{t_0}(a_1)=\phi^{t_0}(a_2)=\rho.$

\end{coro}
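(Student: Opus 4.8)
The plan is to translate every statement about minimizers of $\phi^{t_0}$ into a statement about maximizers of the map $a\mapsto\lambda(a,\rho)$, where Lemma~\ref{lemma-lambda-a} supplies the decisive concavity. Recall that $\lambda(a,\cdot)$ inverts $f(a,\cdot)$ on its increasing branch, so $\phi^{t_0}(a)=f(a,t_0)$ is characterized by $\lambda(a,\phi^{t_0}(a))=t_0$; equivalently, a height $\rho$ is a value of $\phi^{t_0}$ precisely when $\lambda(a,\rho)=t_0$ is solvable for some $a\in(0,\rho)$. Since the existence of a minimizer (with minimum value $m_0$) was already established, I would first invoke the Claim proved above: any $a_0$ with $\phi^{t_0}(a_0)=m_0=f(a_0,t_0)$ realizes a minimum of $f$ in $a$, hence $\lambda(a_0,m_0)=t_0$ is the maximum of $\gamma^{m_0}(a)=\lambda(a,m_0)$. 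Thus every minimizer of $\phi^{t_0}$ is a maximizer of $\gamma^{m_0}$, and in particular $\max_{a}\lambda(a,m_0)=t_0$.

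With this correspondence, uniqueness of $a_0$ reduces to uniqueness of the maximizer of $\gamma^{m_0}$, which Lemma~\ref{lemma-lambda-a} guarantees as soon as $m_0\in J_q$. When $q\geq 1$ we have $J_q=(0,\infty)$, so $m_0\in J_q$ holds automatically and uniqueness is immediate. The case $q<1$ is the heart of the matter: there $J_q=(0,M]$, and I must show that $t_0<T=\lambda(A,M)$ forces $m_0\leq M$, so that the Lemma still applies at $\rho=m_0$. For this I would introduce $\Lambda(\rho):=\max_{a\in(0,\rho)}\lambda(a,\rho)$, observing $\Lambda(M)=\lambda(A,M)=T$. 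On $(0,M]$ the maximizer $a^{*}(\rho)$ is unique and interior by Lemma~\ref{lemma-lambda-a}, so it depends smoothly on $\rho$ (implicit function theorem applied to $\lambda_a=0$, using $\lambda_{aa}<0$), and the envelope identity together with \eqref{ed3} gives
\[
\Lambda'(\rho)=\lambda_\rho(a^{*}(\rho),\rho)=\sinh^q(a^{*}(\rho))\bigl(\sinh^{2q}(\rho)-\sinh^{2q}(a^{*}(\rho))\bigr)^{-1/2}>0.
\]
Hence $\Lambda$ is continuous and strictly increasing on $(0,M]$, with $\Lambda(\rho)\to 0$ as $\rho\to 0^+$ and $\Lambda(M)=T$. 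Since the least height $\rho$ for which $\lambda(\cdot,\rho)=t_0$ is solvable is exactly the least $\rho$ with $\Lambda(\rho)\geq t_0$, and since $t_0<T$, this least height is the unique root of $\Lambda(\rho)=t_0$, which lies in $(0,M)$. That least height is precisely $m_0$, so $m_0\in J_q$; the Lemma then yields a single maximizer of $\gamma^{m_0}$, hence a single minimizer $a_0$.

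The second assertion follows from the boundary behaviour of $\phi^{t_0}$ already recorded before the Claim: $\lim_{a\to\alpha^+}\phi^{t_0}(a)=\lim_{a\to\infty}\phi^{t_0}(a)=\infty$. Fixing $\rho>m_0$ and using $\phi^{t_0}(a_0)=m_0<\rho$, the intermediate value theorem applied separately on $(\alpha,a_0)$ and on $(a_0,\infty)$ produces $a_1\in(\alpha,a_0)$ and $a_2\in(a_0,\infty)$ with $\phi^{t_0}(a_1)=\phi^{t_0}(a_2)=\rho$, and by construction $a_1<a_0<a_2$.

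The main obstacle is the case $q<1$, namely confining $m_0$ to $(0,M]$: outside this range Lemma~\ref{lemma-lambda-a} no longer guarantees $\lambda_{aa}<0$, so $\gamma^\rho$ could have several maxima and the correspondence between minimizers of $\phi^{t_0}$ and maximizers of $\lambda(\cdot,\rho)$ would cease to give uniqueness. The strict monotonicity of $\Lambda$ on $(0,M]$, anchored by the normalization $T=\lambda(A,M)$, is exactly what keeps $m_0$ in the admissible range; establishing that monotonicity (via the envelope computation above) is the step I expect to require the most care.
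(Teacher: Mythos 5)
Your proof is correct and shares its skeleton with the paper's: both reduce uniqueness of the minimizer of $\phi^{t_0}$ to uniqueness of the maximizer of $\gamma^{\rho}(a)=\lambda(a,\rho)$ via the Claim, and both invoke Lemma \ref{lemma-lambda-a} once $m_0$ is known to lie in $J_q$. You deviate in two places, both instructively. First, for $q<1$ the paper disposes of the containment $m_0<M$ with the single sentence that it ``is clear''; you instead build the envelope function $\Lambda(\rho)=\max_a\lambda(a,\rho)$, prove $\Lambda'>0$ on $(0,M]$ via the envelope identity and \eqref{ed3}, and identify $m_0$ as the unique root of $\Lambda=t_0$ in $(0,M)$. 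This is sound (the implicit function theorem applies because the Lemma gives $\lambda_{aa}<0$ at the interior maximizer), but heavier than necessary: since $T=\lambda(A,M)$, one gets directly $\phi^{t_0}(A)=f(A,t_0)<f\bigl(A,\lambda(A,M)\bigr)=M$ for $t_0<T$, hence $m_0<M$ --- presumably the one-liner the paper has in mind; on the other hand your $\Lambda$-monotonicity yields the extra information that $m_0$ depends strictly monotonically on $t_0$, which the paper does not record. Second, for the pair $(a_1,a_2)$ you apply the intermediate value theorem to $\phi^{t_0}$ itself on $(\alpha,a_0)$ and $(a_0,\infty)$, using its blow-up at both ends of the interval; the paper instead fixes $\rho>m_0$ \emph{with} $\rho\in J_q$ and applies the intermediate value theorem to $a\mapsto\gamma^{\rho}(a)$, which vanishes at the endpoints and satisfies $\gamma^{\rho}(a_0)=\lambda(a_0,\rho)>\lambda(a_0,m_0)=t_0$. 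Your variant is marginally stronger: the paper's proof of this second assertion quietly restricts to $\rho\in J_q$ (consistent with item (3) of Proposition \ref{3-cases}), whereas your argument on $\phi^{t_0}$ produces the pair for every $\rho>m_0$, exactly as the statement of the Corollary reads.
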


\begin{proof}

%
%
%
%
%
%

 For $q<1$, it is clear that for each value of $t\in (0,T)$, the minimum value of $\phi^t (a)=f(a,t)$ is less than $M$. Then, taking into account the last claim, we can conclude that  $\phi^t(a)$ has a unique point of minimum since, for each $\rho \in J_q$,  $\gamma_{\rho}$ has a unique point of maximum.  In particular, $\phi^{t_0}$ reaches the minimum value $m_0$ at a unique point, say $a_0.$

Now, we take  $\rho \in J_q$, $\rho>m_0.$ By analyzing the behavior of the profile curves $\lambda (a_0,\rho)$, we see that $\gamma^{\rho}(a_0)=\lambda(a_0,\rho)>\lambda(a_0,m_0)=t_0.$
Since ${\displaystyle{\lim_{a\longrightarrow 0^+}}}\gamma^{\rho}(a)=0$ and that $\gamma^{\rho}(\rho)=0,$ then $\gamma^{\rho}$ reaches the height $t_0$ twice for two values $a_1$ and $a_2$ such that $a_1<a_0<a_2.$ The proof of the Corollary is  now complete.

\end{proof}

\

The following Proposition follows easily from the previous  results. Here, $t_0\in (0,T)$ and $m_0$ and $a_i$, $i=0,1,2$, are the numbers given in Corollary \ref{uniqueness}.

\begin{prop}\label{3-cases} Let $D_+(R),$ $D_-(R)$  two $(n-1)$-spheres of radius $R,$ contained in the slices $t=t_0$ and $t=-t_0$, respectively, with center on
the axis $t.$ We have

\begin{enumerate}
\item[(1)] If $R<m_0,$ there exist no $r$-minimal rotational hypersurfaces with boundary  $D_+(R)\cup D_-(R).$
\item[(2)] If $R=m_0$, there exists a unique $r$-minimal rotational hypersurfaces with boundary \linebreak $D_+(m_0)\cup D_-(m_0), $ namely, ${\mathcal M}_{a_0}(r)$.
\item[(3)] If $R\in J_q$, $R>m_0,$ there exist at least  two  $r$-minimal rotational hypersurfaces with \linebreak boundary $D_+(R)\cup D_-(R).$  Two of them are
${\mathcal M}_{a_1}(r)$ and ${\mathcal M}_{a_2}(r).$

\end{enumerate}

\end{prop}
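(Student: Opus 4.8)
The plan is to read off Proposition \ref{3-cases} directly from Corollary \ref{uniqueness} by translating the analytic statements about the function $\phi^{t_0}$ into geometric statements about rotational hypersurfaces with the prescribed boundary. First I would recall the key dictionary: a rotational $r$-minimal hypersurface is one of the ${\mathcal M}_a(r)$, and by the classification in Theorem \ref{classification-thm} its generating curve is the (reflected) graph of $\lambda(a,\cdot)$, which reaches height $t_0$ at radius $\rho$ precisely when $\phi^{t_0}(a)=f(a,t_0)=\rho$. Thus a hypersurface ${\mathcal M}_a(r)$ has its upper boundary circle sitting on the slice $t=t_0$ at Euclidean-radius corresponding to $\rho=R$ if and only if $\phi^{t_0}(a)=R$; by the evenness of $f(a,\cdot)$ the lower boundary automatically lands on $t=-t_0$ at the same radius, so the two-slice boundary condition $D_+(R)\cup D_-(R)$ is equivalent to a single scalar equation $\phi^{t_0}(a)=R$.

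With this reduction the three cases become statements about the number of solutions $a\in(\alpha,\infty)$ of $\phi^{t_0}(a)=R$, and these are exactly what Corollary \ref{uniqueness} supplies. For case (1), since $m_0$ is by definition the minimum value of $\phi^{t_0}$, the equation $\phi^{t_0}(a)=R$ has no solution when $R<m_0$, so no rotational hypersurface meets the boundary; I would phrase this simply by noting $R<m_0=\min\phi^{t_0}$. For case (2), Corollary \ref{uniqueness} gives that the minimum is attained at the \emph{unique} point $a_0$, so $\phi^{t_0}(a)=m_0$ has the single solution $a=a_0$, yielding exactly ${\mathcal M}_{a_0}(r)$. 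For case (3), the Corollary produces a pair $a_1<a_0<a_2$ with $\phi^{t_0}(a_1)=\phi^{t_0}(a_2)=R$, giving the two named examples ${\mathcal M}_{a_1}(r)$ and ${\mathcal M}_{a_2}(r)$; the phrase \textquotedblleft at least two\textquotedblright\ in the statement reflects that uniqueness of the minimizer does not by itself preclude further oscillation of $\phi^{t_0}$ for large $a$, so I would not claim exactly two.

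The main obstacle, such as it is, is bookkeeping rather than mathematics: I must be careful that every candidate rotational $r$-minimal hypersurface with the given boundary really is one of the ${\mathcal M}_a(r)$ and is not, say, a slice or a cylinder. The slices are horizontal and cannot have the vertical two-sphere boundary $D_\pm(R)$, and for $r+1<n$ Theorem \ref{classification-thm} already tells us the complete rotational examples are exhausted by the family $\{{\mathcal M}_a(r)\}$; since any hypersurface spanning $D_+(R)\cup D_-(R)$ is a compact piece of such a generating curve, the correspondence with solutions of $\phi^{t_0}(a)=R$ is exhaustive. The only genuinely delicate point is that the argument for uniqueness of the minimizer in Corollary \ref{uniqueness} relies on the concavity estimate $\lambda_{aa}<0$ from Lemma \ref{lemma-lambda-a}, which in the case $q<1$ forces the restriction $R\in J_q=(0,M]$ and the corresponding definition of $T=\lambda(A,M)$; I would make sure the hypotheses $t_0\in(0,T)$ and $R\in J_q$ are invoked exactly where the Corollary needs them, so that the translation from the analytic to the geometric statement is valid across both regimes $q\ge 1$ and $q<1$.
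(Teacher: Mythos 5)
Your proposal is correct and is exactly the argument the paper intends: Proposition \ref{3-cases} is stated with no written proof beyond the remark that it ``follows easily from the previous results,'' and your translation of Corollary \ref{uniqueness} (minimum value $m_0$ for nonexistence, uniqueness of the minimizer $a_0$ for case (2), the pair $a_1<a_0<a_2$ for case (3)) is precisely that intended deduction. Your extra bookkeeping---checking via Theorem \ref{classification-thm} that the ${\mathcal M}_a(r)$ exhaust the candidates (one could add that strict convexity of the profile rules out vertical translates of the catenoids, since $f(a,t_0-c)=f(a,-t_0-c)$ forces $c=0$) and tracking where $t_0\in(0,T)$ and $R\in J_q$ are used---is a sound filling-in of details the paper leaves implicit.
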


\

The study of the $r$-catenoids in the Euclidean space was addressed in \cite{HL3}. There, we can see that the vertical heights of the $r$-catenoids are bounded for $q>1$ ($n>2(r+1)$) and unbounded for $q\leq 1$  ($n\leq 2(r+1)$). In $\hyp^n\times\R$, the heights are bounded in both cases. On the other hand, for each admissible value of $t$, the authors in \cite{HL3} were able to prove the uniqueness of the
minimum point of $\phi^t$ by using ambient scaling. Here, by means of geometric arguments and of Lemma \ref{lemma-lambda-a}, we were able to prove the uniqueness for $q\geq 1$, but we fail to prove in the case $q<1$. For $q<1$, we have to restrict the values of $t$  in order to obtain uniqueness. This is, possibly, a technical restriction and we ask the following:

\

{\bf Question:} For any fixed $t_0\in (0, \frac{ \pi (r+1)}{2(n-r-1)})$, we know that there is an $r$-catenoid, ${\cal {C}}$, passing through $(m_0,t_0)$ in the $(\rho, t)$-plane and that all $r$-catenoids passing through $(m,t_0)$ satisfy $m\geq m_0$. Is ${\cal{C}}$ unique?

\

We are able to give a positive answer for $q\geq 1$. For $q<1$, we have to consider $t_0\in (0,T)$. In other words, we ask: can we consider $T=\frac{ \pi (r+1)}{2(n-r-1)}$ for all values of $q$ in Corollary \ref{uniqueness} and in Proposition \ref{3-cases}?

\

\begin{rem}
\label{envelope}
For any $t$ in $(0,\frac{ \pi (r+1)}{2(n-r-1)})$, we define $m(t)$ as the minimum value of the function $\phi^t=f(a,t)$.

Then, the set

\begin{equation*}
{\mathcal E}_{r+1}=\left\{ (m(t)\cos\theta, m(t)\sin\theta, t)\in\h^n\times\R  \ \ | \ \ \theta\in(0,2\pi), \ \ t\in (-T,T)\right\}
\end{equation*}

is  the {\em envelope} of the family ${\mathcal M}_a(r)$, that satisfy $H_{r+1}=0$ { (see Definition 5.16  in \cite{B})}.

\end{rem}

\

Let us state a property or the family ${\mathcal M}_a(r)$ that will be useful in the following section.

\begin{prop}
\label{H-j}
For a fixed $r$, $1\leq r <n-1$, each rotational r-minimal hypersurface of the family ${\mathcal M}_a(r)$ satisfy
\begin{enumerate}
\item [(1)]$H_j>0$, for $j<r+1$.
\item[(2)] $H_{r+1}=0$.
\item [(3)] $H_j<0$, for $r+1<j\leq n$.
\end{enumerate}

\end{prop}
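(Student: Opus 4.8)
The plan is to exploit the explicit form of the principal curvatures of the rotational $r$-catenoid together with the structure of the symmetric functions $H_j$. Recall from the excerpt that for ${\mathcal M}_a(r)$ we have
$$k_1=\cdots=k_{n-1}=\cotgh(f)(1+f_t^2)^{-1/2}=:\kappa>0\qquad\text{and}\qquad k_n=-f_{tt}(1+f_t^2)^{-3/2}=:-\mu,$$
where $\mu>0$ because $f_{tt}=q\cotgh(f)(1+f_t^2)>0$ along the solution of \eqref{CP}. Thus the curvature vector has exactly $n-1$ equal positive entries and one negative entry, and the whole statement reduces to understanding the sign of each elementary symmetric polynomial $\binom{n}{j}H_j=\sigma_j(k_1,\dots,k_n)$ evaluated at this particular configuration.

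The \textbf{first step} is to split each $\sigma_j$ according to whether the term includes the index $n$ or not. Since the first $n-1$ curvatures are all equal to $\kappa$, one gets the clean decomposition
$$\binom{n}{j}H_j=\sigma_j(\underbrace{\kappa,\dots,\kappa}_{n-1},-\mu)=\binom{n-1}{j}\kappa^{\,j}-\mu\binom{n-1}{j-1}\kappa^{\,j-1}.$$
Factoring out $\kappa^{\,j-1}>0$ leaves the sign of $H_j$ controlled by the scalar
$$\binom{n-1}{j}\kappa-\mu\binom{n-1}{j-1},$$
so the entire problem is to determine when this quantity is positive, zero, or negative. The pivotal observation is that the hypothesis $H_{r+1}=0$ (statement (2), which is immediate from the very definition of the family) pins down the ratio $\mu/\kappa$ exactly: setting $j=r+1$ in the displayed expression and equating to zero gives $\mu=\dfrac{\binom{n-1}{r+1}}{\binom{n-1}{r}}\,\kappa=\dfrac{n-r-1}{r+1}\,\kappa=q\,\kappa$, which is of course consistent with the explicit formulas for $\kappa$ and $\mu$ and with the equation in \eqref{CP}.

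The \textbf{second step} is to substitute $\mu=q\kappa$ back into the sign expression for general $j$. This turns the sign of $H_j$ into the sign of
$$\binom{n-1}{j}-q\binom{n-1}{j-1}=\binom{n-1}{j-1}\!\left(\frac{n-j}{j}-\frac{n-r-1}{r+1}\right),$$
and a direct simplification of the bracket shows it equals a positive multiple of $\bigl((r+1)(n-j)-j(n-r-1)\bigr)=n(r+1-j)$, hence is positive exactly when $j<r+1$ and negative exactly when $j>r+1$. This delivers (1) and (3) at once, with $j=r+1$ recovering (2). I would present this monotonicity cleanly as a short computation rather than case-by-case.

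The \textbf{main obstacle}, and the only genuinely delicate point, is ensuring the two positivity facts that make the above signs meaningful: that $\kappa>0$ and $\mu>0$ throughout the generating curve, and that the binomial coefficients $\binom{n-1}{j-1}$ are nonzero in the relevant range $1\le j\le n$. The positivity of $\kappa$ follows from $\cotgh(f)>0$ since $f>0$ on the $r$-catenoid, and the positivity of $\mu$ follows from $f_{tt}>0$, already established in the proof of Theorem \ref{classification-thm}; one should state these explicitly. I would also remark that the strict inequalities in (1) and (3) hold at every point (not merely generically), because the factor $n(r+1-j)$ never vanishes for $j\neq r+1$ and $\kappa^{j-1}>0$. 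With these sign verifications in place, the proposition follows directly from the elementary-symmetric-polynomial identity above.
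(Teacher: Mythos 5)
Your proposal is correct and is essentially the paper's own argument: the paper likewise reads off $k_1=\cdots=k_{n-1}$ and $k_n=-\frac{n-r-1}{r+1}\,k_1$ from \eqref{CP} and then notes that a straightforward computation gives $H_j$ as a positive multiple of $k_1^{\,j}\,[(r+1)-j]$, which yields all three sign statements at once. Your derivation of $\mu=q\kappa$ (via $H_{r+1}=0$, equivalently directly from \eqref{CP}) and the explicit binomial decomposition of $\sigma_j(\kappa,\dots,\kappa,-\mu)$ is simply a more detailed write-up of that same computation.
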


\begin{proof}
By taking \eqref{CP} into account we see that $k_1=\ldots =k_{n-1}$ and that $k_n=-\frac{n-r-1}{r+1}\;k_1$. Then, a straightforward computation yields
$$
H_j=k_1^j\; [(r+1)-j],\;\;\;j=1,\ldots ,n,
$$
that gives the result.
\end{proof}

\

\section{Uniqueness Results}
\label{uniqueness-section}

In this  section we obtain two classification results. The first one  deals with compact $r$-minimal hypersurfaces with boundary on two slices and the
second one  deals with non compact $r$-minimal hypersurfaces with asymptotic boundary   spanned by two copies of $\partial_{\infty}\h^n.$

\

 Before stating the results of this section, we establish some notation.  We denote the slice $\h^n \times \{s\}$, $s\in \R$,  by $\Pi_s$ and  a (closed) slab between two slices by $S$, say  $S=\{(p,t) \mid p\in \h^n,\  t_0\leq t \leq t_1\}$.  The asymptotic boundary of $S$ is given by $\partial_\infty S=\partial_{\infty}\h^n\times[t_0,t_1]$. We set
$\Pi_s^+=\{(p,t) \mid p\in \h^n,\  t>s\}$,
$\Pi_s^-=\{(p,t) \mid p\in \h^n,\  t<s\}$ and, for notational convenience, we write $\Pi=\Pi_0$. Also, we set $\sigma$  for the origin of the slice $\Pi$.

   \

 The complete totally geodesic
hypersurface
${\mathcal P}=\pi\times \R$, where $\pi$ is any totally geodesic \linebreak $(n-1)$-dimensional complete hypersurface
of $\h^n$, is called a {\em vertical hyperplane}.

\

 We will use suitable versions of the interior and boundary maximum  principles for vanishing higher order mean curvatures. We believe it is worthwhile to recall them here and to point out the important differences  between the classical maximum principles for minimal hypersurfaces and these for $r$-minimal hypersurfaces.
For further details about such generalized maximum principles, see \cite{HL1} and \cite{HL2} for hypersurfaces of  Euclidean space  and  \cite{FS} for hypersurfaces of a general Riemannian manifold.

  Let $\overrightarrow{\kappa}=(\kappa_1,...\kappa_n)$ be the principal curvature vector of $M$. Roughly speaking, for $r\geq 1$, the maximum principle requires, as extra hypotheses, that:

\begin{enumerate}
\item[(1)]  the principal curvature vector of the two compared hypersurfaces belong to the same leaf of $H_{r+1}=0$.
\item[(2)]  the rank of the Gauss map (the rank of $\overrightarrow{\kappa}$) of one of the compared hypersurfaces at the contact point is greater than $r.$ This hypothesis guarantees  the ellipticity of the  equation $H_{r+1}=0$ and is satisfied if  $H_{r+2}\neq 0$.

\end{enumerate}

 Let $M,$ $M'$ be two oriented $r$-minimal hypersurfaces of ${\mathbb H}^n\times{\mathbb R}.$
  Let $\overrightarrow{\kappa}$ (respectively $\overrightarrow{\kappa'}$) be the principal curvature vector of $M$ (respectively $M'$).

  {\bf Theorem A.} (Corollary  1.a \cite{FS}) {\em Let $M$ and $M'$ two $r$-minimal oriented hypersurfaces, tangent at a point $p,$ with normal vector pointing in the same direction. Suppose that $M$ remains on one side of $M'$  in a neighborhood of $p.$ Suppose that
  $\overrightarrow{\kappa}(p)$ and  $\overrightarrow{\kappa'}(p)$ belong to the same leaf of $H_{r+1}=0$ and that the rank of either $\overrightarrow{\kappa}$ or $\overrightarrow{\kappa'}$ is at least $r.$ Then $M$ and $M'$ coincide in a neighborhood of $p.$

}

{\bf Theorem B.} (Theorem  2.a \cite{FS}) {\em Let $M$ and $M'$ two $r$-minimal oriented hypersurfaces, tangent at a point $p,$ with normal vector pointing in the same direction. Suppose that $M$ remains on one side of $M'$  in a neighborhood of $p.$ Suppose further that $H_j'(p)\geq 0,$ $1\leq j\leq r$ and either $H_{r+2}\not=0$ or  $H'_{r+2}\not=0.$  Then $M$ and $M'$ coincide in a neighborhood of $p.$ }

  The analogous of both  Theorem A and B  hold for  hypersurfaces  tangent at boundary points (see Corollary 1.b and Theorem 2.b  \cite{FS}).

For the reader's convenience, we
explain here in which cases  either Theorem A or Theorem B   (and their boundary versions) can be used. Then, it will be clear in the following when we use  either the first or the second one.

\begin{itemize}
\item Theorem A will be used for the comparison of an $r$-minimal hypersurface with a reflection of the hypersurface itself.
The assumption of Theorem A are satisfied by a hypersurface and its reflection because of  the following two facts:
\begin{itemize}
\item[{\bf Fact 1:}] Due to properties of hyperbolic polynomials, the principal curvature vector of a  connected hypersurface with $H_{r+1}=0$ and $H_{r+2}\neq 0$ does not change of leaf (see \cite{HL2} for details).

\item[{\bf Fact 2:}] Let $\tau$ be an isometry of $\h^n\times\R$ that preserves the orientation of either $\h^n$ or $\R$ and reverses the other. Let $f:M\to \h^n\times\R$ be an immersion and set $\hat{f}=\tau\circ f$. Then, we have $\hat{N}=-\tau\circ N,$ where $\hat{N}$ is the normal vector to $\hat{f}$   (see \cite{D}, Proposition (3.8)). As a consequence, the second fundamental forms of $f$ and $\hat{f}$  have opposite sign.

\end{itemize}

\item Theorem B will be used for the comparison of an $r$-minimal hypersurface with one of the ${\mathcal M}_a(r)$  that, by Proposition \ref{H-j}, satisfy
$H_j>0$ for $j<r+1$ and $H_{r+2}<0$.
\end{itemize}

  Now, we recall  the description of a family   of hypersurfaces found  by the first author and Ricardo Sa Earp in \cite{ES}, that will be crucial in the proof of Proposition \ref{key-lemma}.  There, the authors proved the existence of a family ${ \cal F}_{\sigma}$ of entire rotational strictly convex graphs with constant $H_{r+1}\in (0,\frac{n-r-1}{n}]$ that satisfy the following properties (see {\cite[Propositions (6.4) and (6.5)]{ES}):
 \begin{itemize}
 \item[(1)] The graphs of the family ${ \cal F}_{\sigma}$ intersect each other only at the point $\sigma$. Moreover, they are  tangent to the slice $\Pi$ at $\sigma$ and have  normal vector pointing upwards.
 \item[(2)] The graphs of the family  ${ \cal F}_{\sigma}$  converge to $\Pi$ uniformly on compact sets as $H_{r+1}$ goes to zero.

 \end{itemize}

  By an isometry of the ambient space, we can produce a new family with an arbitrary common point $q$
  and with normal pointing either upward or downward. We denote by  ${\cal F}_q$ the family with common point $q$ and upward normal vector and by  $\widetilde{\cal{F}}_q$ the one with downward normal vector.

\begin{prop}
\label{key-lemma}
Let $M$ be an  $r$-minimal hypersurface in $\h^{n}\times\R$  such that $\partial M$ and $\partial_\infty M$, one of them possibly empty, are contained in $S\cup \partial_\infty S$, for a given slab $S$. Then $M$ is contained in $S$.




\end{prop}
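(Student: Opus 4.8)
The plan is to establish the two one-sided inclusions $M\subset\overline{\Pi_{t_1}^-}$ and $M\subset\overline{\Pi_{t_0}^+}$; together they give $M\subset S$. Since reflection across a horizontal slice is an isometry of $\h^n\times\R$ that carries $S$ to $S$ and interchanges the two slices, the two inclusions are equivalent, so I would only argue the first. Suppose, for contradiction, that $M$ has points strictly above $\Pi_{t_1}$, and put $h=\sup_M t$. The first step is to check that $h$ is finite and attained at an interior point. The hypothesis $\partial M\subset S$ keeps the finite boundary at height $\le t_1$, while $\partial_\infty M\subset S\cup\partial_\infty S$ forces every ideal limit point of $M$ into $\partial_\infty\h^n\times[t_0,t_1]$; in particular $M$ cannot accumulate at the ideal endpoints of vertical geodesics, so $h<\infty$, and by properness any sequence realizing $h$ must stay in a compact region of $\h^n\times\R$ and subconverge to an interior point $p_0$ with $t(p_0)=h>t_1$. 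Consequently the superlevel set $\{t\ge\tfrac{t_1+h}{2}\}\cap M$ is a compact piece of the interior of $M$, on whose boundary $t=\tfrac{t_1+h}{2}<h$.

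At $p_0$ the hypersurface is tangent to $\Pi_h$ from below. The naive idea would be to compare $M$ with the slice $\Pi_h$, but this is exactly the comparison that is unavailable for $r\ge 1$: a slice has all principal curvatures zero, so at a possibly flat maximum of $M$ neither the rank hypothesis (rank $\ge r$) nor the same-leaf hypothesis of Theorems A and B can be guaranteed, and $H_{r+1}$ need not be elliptic there. To recover ellipticity I would replace the slice by the strictly convex graphs of \cite{ES}. Fix a small $c>0$ and take a member $G_c$ of the downward family $\widetilde{\mathcal F}_q$: an entire strictly convex graph with $H_{r+1}=c>0$ computed with respect to its downward-pointing normal. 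Being strictly convex, $G_c$ has Gauss map of full rank $n>r$, so the tangency comparison is governed by its shape operator.

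The core of the proof is then a sliding argument: I would place $G_c$ as a downward dome with vertex far above $M$ and lower it vertically until it first meets $M$. At the first contact point $M$ lies below $G_c$ and is tangent to it with the same downward normal; hence the shape operator of $M$ dominates that of the strictly convex $G_c$ there, so all principal curvatures of $M$ are positive and $H_{r+1}[M]>0$, contradicting $H_{r+1}[M]=0$ — provided the contact is interior. Guaranteeing interiority is the only genuine difficulty, and it is precisely what property (2) of the family is for: since the graphs converge to a slice uniformly on compact sets as $c\to 0$, for $c$ small $G_c$ is nearly horizontal over the compact bump $\{t\ge\tfrac{t_1+h}{2}\}$, so when lowered it first touches this bump at an interior point near $p_0$, at height close to $h$ and strictly above the level $\tfrac{t_1+h}{2}$ where the bump meets the rest of $M$ and above the ideal boundary, where $M$ limits at height $t_1$. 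Thus the first contact is interior, the contradiction follows, and $M\subset\overline{\Pi_{t_1}^-}$. Running the symmetric argument at $\Pi_{t_0}$ with the upward family $\mathcal F_q$ yields $M\subset\overline{\Pi_{t_0}^+}$, and therefore $M\subset S$.
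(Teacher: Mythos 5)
Your proof is correct and follows essentially the same route as the paper's: both isolate a compact piece of $M$ lying strictly above the slab (compactness forced by the hypotheses on $\partial M$ and $\partial_\infty M$) and slide the strictly convex constant $H_{r+1}$ graphs of the family $\widetilde{\cal F}_q$ from \cite{ES} down to a first interior contact point, where $M$ would have a strictly convex point, contradicting $H_{r+1}=0$ and bypassing the ellipticity issues of the maximum principle. The only cosmetic difference is that you fix one small value of $H_{r+1}=c$ and compare against the superlevel set $\{t\geq \frac{t_1+h}{2}\}\cap M$, whereas the paper lowers the vertex $q$ and flattens the graphs $\Sigma_i$ simultaneously against $M\cap\Pi_\varepsilon^+$.
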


\begin{proof}
Suppose that $M$ is not contained in the slab $S$. Without loss of generality, we can assume that $S=\{(p,t) \mid p\in \h^n,\  s\leq t \leq 0 \}$ and that there is a subset of $M$ in $\Pi^+$. Now, we choose $\varepsilon>0$ such that $M^+_\varepsilon=M\cap \Pi_{\varepsilon}^+$ is not empty. Since $\partial M$ and $\partial_\infty M$ are in the slab, $M_{\varepsilon}$ is compact with boundary in $\Pi_\varepsilon$. Let $q$ be a point above $M^+_\varepsilon$ and let
$\{\Sigma_i\}_{i\in{\mathbb N}}$  be a sequence of graphs with constant $(r+1)$-mean curvature in the family
$\widetilde{\cal {F}}_q$ that converges to the slice passing through $q$ when $i$ tends to infinity.
Since $M^+_\varepsilon$ is compact, we can suppose, taking a large $i$ if necessary, that $M^+_\varepsilon$ is contained in the  convex side of $\Sigma_i$, for all $i$. Let $l$ be the vertical line passing through $q$. Now, we let $q$ move downwards along $l$ and simultaneously we let $i$ increase. We do this process keeping $M^+_\varepsilon$ in the convex side of the translated $\Sigma_i$, by choosing a subsequence, if necessary. We do this until one of the translated $\Sigma_i$ touches $M^+_\varepsilon$.  Such contact point must be interior and a strictly convex point of $M$. This is a contradiction since $M$ is $r$-minimal.

\end{proof}

\

\begin{coro}
\label{orientation-coro}
Let $M\subset \h^n\times\R$ be a compact embedded $r$-minimal hypersurface with boundary contained in $\Pi_s\cup\Pi_t$, $s<t$ and assume that $\partial M_s=\partial M\cap \Pi_s\neq\emptyset$ and $\partial M_t=\partial M\cap \Pi_t\neq\emptyset$.  Then, $M$ can be oriented by a continuous normal
pointing into the interior of a closed domain $U$ in $\h^n\times\R$, with $M\subset\partial U.$

\end{coro}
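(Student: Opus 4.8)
The plan is to close $M$ up into a compact hypersurface without boundary and then invoke the Jordan--Brouwer separation theorem in $\h^n\times\R$, which is homeomorphic to $\R^{n+1}$ because $\h^n$ is homeomorphic to $\R^n$. First I would apply Proposition~\ref{key-lemma}: since $M$ is compact we have $\partial_\infty M=\emptyset$, and $\partial M\subset\Pi_s\cup\Pi_t\subset S$, where $S$ is the closed slab bounded by $\Pi_s$ and $\Pi_t$; hence $M\subset S$. Thus $M$ lies in the slab with its boundary $\partial M_s$ on $\Pi_s$ and $\partial M_t$ on $\Pi_t$.

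Next I would construct the caps. Each $\partial M_s$ is a compact embedded $(n-1)$-dimensional hypersurface of the slice $\Pi_s\cong\h^n$, which (as $n\ge 2$) is homeomorphic to $\R^n$; by the separation theorem in $\Pi_s$ it bounds a compact region $\Omega_s\subset\Pi_s$ with $\partial\Omega_s=\partial M_s$, and likewise $\partial M_t$ bounds a compact region $\Omega_t\subset\Pi_t$. Setting $\hat M=M\cup\Omega_s\cup\Omega_t$, the identity $\partial M=\partial M_s\cup\partial M_t=\partial\Omega_s\cup\partial\Omega_t$ shows that $\hat M$ has empty boundary; since the caps lie in distinct slices while the interior of $M$ lies in the open slab, $\hat M$ is a compact embedded (locally flat) topological hypersurface of $\R^{n+1}$.

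Granting this, the Jordan--Brouwer theorem gives that $\hat M$ is two-sided and separates $\R^{n+1}$, so there is a well-defined union of bounded complementary components; I would let $U$ be its closure. Then $M\subset\hat M=\partial U$, and the unit normal along $\hat M$ pointing into $U$ is continuous and restricts to a continuous normal along the smooth piece $M$ with exactly the asserted property.

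The main obstacle is the point used in forming $\hat M$: that $M$ meets the slices only along $\partial M$, so that the caps do not cross the interior of $M$. Because $M\subset S$, an interior point of $M$ on $\Pi_s$ (resp. $\Pi_t$) is necessarily a point where the height $t|_M$ attains its minimum (resp. maximum), so there $M$ is tangent to the slice from one side. The natural tool to exclude this is the interior maximum principle (Theorem~A) comparing $M$ with the $r$-minimal slice; the delicacy is that the slice has all principal curvatures zero, so the rank (ellipticity) hypothesis must be supplied by $M$ at the contact point, which is precisely where the relation $H_{r+1}=0$ together with a one-sided tangency forces the rank of $\overrightarrow{\kappa}$ to be low. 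I would therefore argue that even if such a tangential contact occurs it does not affect the conclusion: it leaves $M$ on one fixed side of $U$, so the inward normal along the smooth hypersurface $M$ remains continuous and keeps pointing into $U$, at worst producing non-smooth points of $\partial U$ along the contact set. This verification is the step that requires the most care.
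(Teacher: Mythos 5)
Your proof follows essentially the same route as the paper's: Proposition~\ref{key-lemma} confines $M$ to the slab, the boundary components are capped off by the bounded regions they enclose in $\Pi_s$ and $\Pi_t$, and the resulting closed hypersurface bounds a compact domain $U$ (the paper phrases this as $M\cup D_s\cup D_t$ being an orientable homological boundary of an $(n+1)$-dimensional chain), whose inward normal gives the desired orientation. The tangency subtlety you flag --- interior points of $M$ meeting the slices, where ellipticity of $H_{r+1}=0$ may degenerate and the maximum principle's rank hypothesis can fail --- is not addressed in the paper's brief proof at all, so your extra discussion goes beyond, rather than against, the published argument.
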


\begin{proof}
By the last proposition, we have that $M$ is contained in the slab between $\Pi_s$ and $\Pi_t$. Let $D_s\subset \Pi_s$ and $D_t\subset\Pi_t$ be the bounded region such that $\partial D_s=\partial M_s$ and $\partial D_t=\partial M_t$. Then, $M\cup D_s\cap D_t$ is  an orientable homological boundary  of an (n+1)-dimensional chain in $\h^n\times\R$. We choose the inwards normal to $M\cup D_s\cap D_t$.
\end{proof}

\

\

\
Next Theorem is a uniqueness result for compact $r$-minimal hypersurfaces with boundary in two parallel slices. The analogous result in
the Euclidean space  is Theorem 3.2 in \cite{HL3}.

In the next statement, $t_0$, $m_0$ and $a_0$ are as in Corollary  \ref{uniqueness}. Also, we recall that $D_+(R)$ and  $D_-(R)$ are  two $(n-1)$-spheres of radius $R,$ contained in the slices $t=t_0$ and $t=-t_0$, respectively, with center on
the axis $t.$

\begin{thm}
\label{uniqueness-theo}
Let $M$ be a compact, connected  and embedded r-minimal hypersurface in $\h^n\times\R$, $1\leq r<n-1$, with boundary contained in $\Pi_{t_o}\cup\Pi_{-t_0}$ with $\partial M_+=\partial M\cap \Pi_{t_0}\neq\emptyset$ and $\partial M_-=\partial M\cap \Pi_{-t_0}\neq\emptyset$.
We suppose that $\partial M_+\subset D_+ (m_0)$ and that $\partial M_-\subset D_- (m_0)$. Then, $M$ coincides with the unique rotational hypersurface ${\mathcal M}_{a_0}(r)$ with boundary $D_+(m_0)\cup D_-(m_0).$
\end{thm}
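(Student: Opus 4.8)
The plan is to show that $M$ coincides with $\mathcal{M}_{a_0}(r)$ by a reflection (Alexandrov-type) argument combined with a comparison against the rotational catenoid family, using the two maximum principles (Theorems A and B) recalled above. First I would invoke Proposition \ref{key-lemma} to guarantee that $M$ lies inside the closed slab between $\Pi_{-t_0}$ and $\Pi_{t_0}$, since $\partial M \subset \Pi_{t_0}\cup \Pi_{-t_0}$ and $\partial_\infty M$ is empty. By Corollary \ref{orientation-coro}, $M$ can be oriented as the inward normal to a domain $U$ with $M \subset \partial U$. This fixes a coherent orientation so that the sign hypotheses $H_j \geq 0$ of Theorem B become meaningful when comparing $M$ with the catenoid $\mathcal{M}_{a_0}(r)$, which by Proposition \ref{H-j} satisfies $H_j > 0$ for $j < r+1$ and $H_{r+2} < 0$.

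The core step is to compare $M$ with the rotational catenoid $\mathcal{M}_{a_0}(r)$, whose existence and uniqueness (given the boundary $D_+(m_0)\cup D_-(m_0)$) is furnished by Proposition \ref{3-cases}(2). The idea is to start with a large rotational catenoid (a horizontal translate, or a catenoid $\mathcal{M}_a(r)$ with large $a$) positioned entirely off to one side of $M$, and then slide it continuously toward $M$ until a first contact occurs. At the first point of contact, either the contact is interior or it occurs at the boundary; in both cases I would apply the appropriate version of Theorem B (interior or boundary version, Theorem 2.a or 2.b of \cite{FS}). The hypotheses of Theorem B are met because the comparison catenoid satisfies $H_j > 0$ for $1 \le j \le r$ and $H_{r+2}\neq 0$, so ellipticity is secured. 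The conclusion is that $M$ and the catenoid coincide on a neighborhood of the contact point, and by connectedness of $M$ (and the standard continuation argument, propagating the coincidence set which is both open and closed) one concludes $M = \mathcal{M}_{a_0}(r)$ globally.

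A crucial structural point is that $m_0$ is the \emph{minimum} value of the neck radius: by Corollary \ref{uniqueness}, for the given $t_0$ there is a unique $a_0$ realizing $m_0 = \phi^{t_0}(a_0)$ as the minimum of $\phi^{t_0}$, so no catenoid with boundary radius smaller than $m_0$ exists, and $\mathcal{M}_{a_0}(r)$ is the unique rotational competitor with exactly this boundary. This extremality is what forces the sliding argument to terminate precisely at $\mathcal{M}_{a_0}(r)$ rather than passing through $M$: because $M$ reaches down to the minimal radius $m_0$ on both slices while being trapped in the slab, the catenoid cannot be pushed past $M$ without creating a tangential interior contact.

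The main obstacle I anticipate is verifying the sign/leaf hypotheses of the maximum principle at the contact point and ensuring that the two hypersurfaces are tangent with normals pointing in the same direction there, so that one genuinely lies to one side of the other. Unlike the classical minimal case, ellipticity is not automatic, and one must confirm at each application that the rank of the Gauss map exceeds $r$ (equivalently $H_{r+2}\neq 0$) for at least one of the two surfaces — here guaranteed on the catenoid side by Proposition \ref{H-j}. A secondary delicate point is organizing the sliding so that the first contact is controlled: one must rule out contact escaping to infinity or occurring only asymptotically, which is handled by the compactness of $M$ (from Proposition \ref{key-lemma}) and by matching the boundaries on the slices $\Pi_{\pm t_0}$.
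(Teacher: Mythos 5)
Your overall strategy coincides with the paper's: confine $M$ to the slab via Proposition \ref{key-lemma}, orient it by Corollary \ref{orientation-coro}, enclose $M$ inside $\mathcal{M}_a(r)$ for $a$ large, decrease $a$ until a first contact, and apply Theorem B with Proposition \ref{H-j} supplying the sign and ellipticity hypotheses on the catenoid side. But there is a genuine gap at the boundary-contact alternative. You write that if the first contact occurs at the boundary you would apply the boundary version of Theorem B; that version (Theorem 2.b of \cite{FS}) requires the two hypersurfaces to be \emph{tangent} at the contact point, i.e.\ to share a tangent hyperplane there. Since the hypothesis is only $\partial M_{\pm}\subset D_{\pm}(m_0)$, the boundary of $M$ need not coincide with $\partial D_{\pm}(m_0)$, and even when the first touching point $q$ lies on $\partial D_+(m_0)\cup\partial D_-(m_0)$ (which Proposition \ref{3-cases} forces, together with $\alpha=a_0$), the slope of $T_qM$ can be strictly smaller than that of $T_q\mathcal{M}_{a_0}(r)$. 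In that situation no version of the maximum principle applies at $q$, and your sliding argument stalls: ``matching the boundaries on the slices'' does not by itself produce tangency.

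The paper closes exactly this case with a further continuation argument that your proposal is missing: if the slopes differ at $q$, then for small $\varepsilon>0$ the catenoid $\mathcal{M}_{a_0-\varepsilon}(r)$, whose boundary spheres have radius $\phi^{t_0}(a_0-\varepsilon)>m_0$ by the strict minimality of $m_0$ at $a_0$ (Corollary \ref{uniqueness}), still bounds inside $S$ a region containing $M$; one then keeps decreasing $a$ below $a_0$, observes via Proposition \ref{3-cases} that $\partial\mathcal{M}_a(r)$ can never again meet $\partial M$, and uses the fact that the waist of $\mathcal{M}_a(r)$ shrinks to zero as $a\to 0$ to force an \emph{interior} first contact with some $\mathcal{M}_{\bar a}(r)$, $\bar a<a_0$. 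Theorem B then makes $M$ coincide with $\mathcal{M}_{\bar a}(r)$, contradicting the disjointness of their boundaries. (The same waist-pinching/connectedness observation is also what guarantees in the first place that a first contact occurs at some positive parameter, a point you assert but do not justify.) Apart from this missing case --- and the stray suggestion of a ``horizontal translate,'' which would destroy the coaxial structure the whole comparison relies on --- your interior-contact analysis and your use of the extremality of $m_0$ match the paper's proof.
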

\begin{proof}

By Proposition \ref{key-lemma}, $M$ is contained in the slab $S$ between $\Pi_{t_0}$ and $\Pi_{-t_0}.$ We orient $M$ as  in Corollary \ref{orientation-coro}. As $M$ is compact,  for $a$ large enough,   there exists   a rotational hypersurface ${\mathcal M}_a(r),$ such that $M$ is contained in the compact component determined by ${\mathcal M}_a(r)\cap S.$ Now, we let $a$ decrease. It is clear that there exists $\alpha>0$ such that $ {\mathcal M}_{\alpha}(r)$ has a first contact point with $M.$ We notice that $\alpha\not=0$ because the waist of
$ {\mathcal M}_a(r)$ shrinks to zero as $a\longrightarrow 0$ and the absence of a contact point  before $a=0$ would contradict the connectedness of $M.$

If the first contact point  $p$ between $M$ and ${\mathcal M}_{\alpha}(r)$ is an interior point of $M,$ then $M$ and $ {\mathcal M}_{\alpha}(r)$ are tangent at $p,$ both have normal vectors pointing into the compact region determined by  $M\cap S$  and $M$ lies above
${\mathcal M}_{\alpha}(r)$ with respect to the normal vector (recall that ${\mathcal M}_{\alpha}(r)$ is oriented as in \eqref{normal-vector-rot}).
By Proposition \ref{H-j}, $M_{\alpha}(r)$ is such that $H_{r+2}<0$ and $H_j>0,$ for $j<r+1,$ hence by the maximum principle (see Theorem B), $M$ and
${\mathcal M}_{\alpha}(r)$ coincide in a neighborhood of $p.$ Then, they coincide  everywhere. Moreover, since
$\partial M\subset D_+(m_0)\cup D_-(m_0),$ Proposition \ref{3-cases}  gives the result.

Now, let analyze the case where the first contact point between $M$ and ${\mathcal M}_{\alpha}(r)$ is on $\partial M.$
Let $q\in \partial M$ be a first contact point between $M$ and ${\mathcal M}_{\alpha}(r).$
As $\partial M\subset D_+(m_0)\cup D_-(m_0),$ again by Proposition  \ref{3-cases}, $\alpha=a_0$ and $q$ belongs to
$\partial D_+(m_0)\cup \partial D_-(m_0).$

If the tangent planes at $q$ to $ M$ and $ {\mathcal M}_{a_0}(r)$ coincide, then, by the boundary maximum principle (see Theorem 2.b \cite{FS}, that is the boundary version of Theorem B), $M$ and
$ {\mathcal M}_{\alpha}(r)$ coincide as well and the result is proved.

Otherwise, the slope of $T_qM$ is strictly smaller than the slope of $T_q {\mathcal M}_{a_0}(r).$ We will get a contradiction in this case.
By Proposition \ref{3-cases}, for $\varepsilon$ small, ${\mathcal M}_{a_0-\varepsilon}(r)$ is such that
${\mathcal M}_{a_0-\varepsilon}(r)\cap (\Pi_{t_0}\cup\Pi_{-t_0})$ contains $D_+(m_0)\cup D_-(m_0) $ in its interior.
This  last fact, joint with the fact that the slope of $T_qM$ is strictly smaller than the slope of $T_q {\mathcal M}_{a_0}(r)$ yield that
${\mathcal M}_{a_0-\varepsilon}(r)\cap S$ bounds a region containing $M.$ Now, if we continue decreasing $a,$
$\partial {\mathcal M}_{a}(r)$ can not touch $\partial M$ again (because of Proposition \ref{3-cases}), but for  $a\longrightarrow 0,$ the waist of
${\mathcal M}_{a}(r)$ shrink to zero, so there must be an interior contact point between $M$
and $ {\mathcal M}_{\bar a}(r),$ for some $\bar a<a_0.$ Then, as before, $M$
and $ {\mathcal M}_{\bar a}(r)$ must coincide. This is a contradiction because they have disjoint boundaries.

\end{proof}

\
 Theorem \ref{uniqueness-theo} implies the following result (with the same notation as there).

\begin{coro}
There is no compact, connected and embedded r-minimal   hypersurface in $\h^n\times\R$, $1\leq r<n-1$, with $\partial M_+\subset D_+ (R)$ and $\partial M_-\subset D_- (R)$, for $R<m_o$.
\label{coro-normal}
\end{coro}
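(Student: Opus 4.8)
The plan is to derive this as an immediate corollary of Theorem \ref{uniqueness-theo} by a proof by contradiction, leveraging the case analysis already established there. Suppose, for contradiction, that there exists a compact, connected, embedded $r$-minimal hypersurface $M$ in $\h^n\times\R$ with $\partial M_+\subset D_+(R)$ and $\partial M_-\subset D_-(R)$ for some $R<m_0$. By Proposition \ref{key-lemma}, $M$ is contained in the slab $S$ between $\Pi_{t_0}$ and $\Pi_{-t_0}$, and by Corollary \ref{orientation-coro} we may orient $M$ by a normal pointing into the interior of a closed domain $U$ with $M\subset\partial U$.

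The key geometric idea is the same sweeping argument used in Theorem \ref{uniqueness-theo}: first I would start with a rotational hypersurface ${\mathcal M}_a(r)$ for $a$ large, so that $M$ sits inside the compact component determined by ${\mathcal M}_a(r)\cap S$, and then let $a$ decrease until a first contact point appears. As observed in Theorem \ref{uniqueness-theo}, the waist of ${\mathcal M}_a(r)$ shrinks to zero as $a\to 0$, and connectedness of $M$ forces a first contact at some $\alpha>0$. The point I want to exploit is that by Proposition \ref{3-cases}(1), there is no $r$-catenoid whatsoever with boundary on $D_+(R)\cup D_-(R)$ when $R<m_0$, since $m_0$ is the minimum value of $\phi^{t_0}$. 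Consequently, for every admissible $a$, the boundary $\partial{\mathcal M}_a(r)$ consists of circles of radius strictly larger than $R$, so no ${\mathcal M}_a(r)$ can have a first contact with $M$ along $\partial M$.

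This reduces the situation to a single case: the first contact point $p$ between $M$ and ${\mathcal M}_\alpha(r)$ must be an \emph{interior} point of $M$. But then, exactly as in the interior-contact case of Theorem \ref{uniqueness-theo}, the two hypersurfaces are tangent at $p$ with normals pointing into the compact region determined by $M\cap S$, with $M$ lying above ${\mathcal M}_\alpha(r)$; since by Proposition \ref{H-j} we have $H_{r+2}<0$ and $H_j>0$ for $j<r+1$ on ${\mathcal M}_\alpha(r)$, the maximum principle (Theorem B) forces $M$ and ${\mathcal M}_\alpha(r)$ to coincide in a neighborhood of $p$, hence everywhere. This is the contradiction: $M$ would then equal a rotational hypersurface whose boundary lies on $D_+(R)\cup D_-(R)$ with $R<m_0$, which Proposition \ref{3-cases}(1) forbids.

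I expect the main subtlety—rather than a genuine obstacle—to be verifying carefully that the contact must occur at an interior point and not at the boundary. The clean way to handle this is to note that during the entire descent in $a$, the boundary circles of ${\mathcal M}_a(r)$ have radius bounded below by a value exceeding $R$ (since $R<m_0$ and $m_0$ is the minimal neck-to-slice radius realizable), so $\partial{\mathcal M}_a(r)$ stays disjoint from $\partial M$ throughout the sweep. Once that is pinned down, the argument is a direct transcription of the interior case of the preceding theorem, and no new computation is required.
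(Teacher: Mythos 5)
Your proof is correct, but it is not the paper's derivation: the paper gives no separate argument for this corollary and treats it as an immediate consequence of the \emph{statement} of Theorem \ref{uniqueness-theo} --- if such an $M$ existed, then since $D_\pm(R)$ lies inside $D_\pm(m_0)$, the hypotheses $\partial M_\pm\subset D_\pm(m_0)$ hold, the theorem forces $M={\mathcal M}_{a_0}(r)$, and the boundary of ${\mathcal M}_{a_0}(r)$ consists of spheres of radius $m_0>R$, a contradiction in two lines. You instead re-execute the sweeping argument from the theorem's \emph{proof}: trap $M$ inside a large catenoid, decrease $a$, exclude boundary contact because every catenoid that meets $\Pi_{\pm t_0}$ does so in spheres of radius $\phi^{t_0}(a)\geq m_0>R$ (by the definition of $m_0$ as the minimum of $\phi^{t_0}$, which is also the content of Proposition \ref{3-cases}(1)), and then apply Theorem B at the forced interior contact. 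Your route is longer but buys two things: it is self-contained, and it entirely bypasses the delicate boundary-contact case of Theorem \ref{uniqueness-theo} (the slope comparison at $q\in\partial M$), which is genuinely impossible here; it also sidesteps a small notational ambiguity in the paper (the $D_\pm$ are introduced as $(n-1)$-\emph{spheres}, under which literal reading $\partial M_+\subset D_+(R)$ does not imply $\partial M_+\subset D_+(m_0)$, so the black-box application tacitly uses the disc reading). One refinement worth adding to your "subtlety" paragraph: for $a$ with $L(a)\leq t_0$ the catenoid ${\mathcal M}_a(r)$ lies entirely in the open slab and has no points at heights $\pm t_0$ at all (indeed $\phi^{t_0}(a)\to\infty$ as $a\to\alpha^+$), so the correct statement is that \emph{whenever} ${\mathcal M}_a(r)$ meets $\Pi_{\pm t_0}$ it does so at radius $\geq m_0>R$; either way $\partial M$ is never touched during the sweep, exactly as you conclude, and the contradiction stands.
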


\

 Theorem \ref{uniqueness-cat} below is inspired by the classical result of Schoen \cite[Theorem 3]{S}. As in the proof of Theorem \ref{uniqueness-theo}, we have to deal with the restrictions imposed by the geometry of $\h^n\times\R$. Here, based on the ideas contained in \cite{NST}, we change the assumption of regular ends at infinity in the Euclidean space by that of asymptotic boundary in two parallel slices in $\h^n\times\R$. The proofs of  Lemma \ref{symmetry-lemma} and  Theorem \ref{uniqueness-cat} are very similar to that of \cite[Lemma 2.1, Theorem 4.2]{NST}. The differences are essentially due  to the differences in
the hypotheses of the maximum principle for minimal and for $r$-minimal hypersurfaces.
Also, we point out that arguments used to prove embeddedness of the minimal immersion in \cite{S} and \cite{NST}  can not be carried out here.
The obstruction is the requirement in the maximum principle, for the $r$-minimal case, that the principle curvature vectors belong to the same leaf.
Then, here, embeddedness is a hypothesis.


\begin{lem}\label{symmetry-lemma}
 Let $\Gamma^+$ and $\;\Gamma^-$ be two $(n-1)$-manifolds in $\partial_{\infty}\h^n \times \R$
which are vertical graphs over $\partial_{\infty} \h^n \times \{0\}$ and such that
$\Gamma^+\subset \partial_{\infty}\Pi^+$ and $\Gamma^-\subset \partial_{\infty} \Pi^-$. Assume that
 $\Gamma^-$ is the  symmetric  of $\Gamma^+$ with respect to $\Pi$. Let $M\subset \h^n \times \R$ be an  embedded, connected,  complete $r$-minimal
hypersurface, $1\leq r<n-1$,
with two ends $E^+$ and $E^-$. Assume that each end is a vertical graph
and that $\partial_{\infty}M= \Gamma^+ \cup \Gamma^-$, that is
$\partial_{\infty} E^+= \Gamma^+$ and $\partial_{\infty} E^-= \Gamma^-$.
Moreover, assume that $H_{r+2}\not=0.$ Then $M$ is symmetric with respect to $\Pi$. Furthermore, each part $M\cap \Pi^\pm$ is a vertical graph.

\end{lem}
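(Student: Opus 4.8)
The plan is to use the Alexandrov reflection technique (method of moving planes) with respect to the family of horizontal slices $\Pi_s$, which is the standard approach for proving symmetry of surfaces with symmetric boundary data. This is natural here because the hypotheses are themselves symmetric with respect to $\Pi=\Pi_0$: the asymptotic boundary $\Gamma^-$ is the reflection of $\Gamma^+$, and the ambient metric $g_\hyp+dt^2$ is invariant under the reflection $\tau$ across $\Pi$. First I would fix the notation for this reflection $\tau(p,t)=(p,-t)$ and, for $s>0$, denote by $\tau_s$ the reflection across $\Pi_s$ and by $M_s^+=M\cap\Pi_s^+$ the part of $M$ above the slice. The idea is to reflect $M_s^+$ downwards and compare the reflected copy $\tau_s(M_s^+)$ with $M$ itself, decreasing $s$ from a large value toward $0$ and detecting the first contact.

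The key steps, in order, would be the following. First, I would establish that $M$ is contained in a slab: since $\partial M=\emptyset$ (the surface is complete with two ends) and $\partial_\infty M=\Gamma^+\cup\Gamma^-$ lies in a slab $\partial_\infty S$ determined by the graphs $\Gamma^\pm$, Proposition \ref{key-lemma} forces $M\subset S$. Second, because each end is a vertical graph and $M$ is properly embedded in the slab, for $s$ close to the top of the slab the cap $M_s^+$ is a graph that can be reflected into the slab without meeting $M$; thus reflection is possible and $\tau_s(M_s^+)$ stays on one side of $M$ near the top. Third, I would start decreasing $s$ and invoke the usual dichotomy: either reflection proceeds all the way to $s=0$, or a first contact occurs at some $s_0>0$, happening either at an interior tangency point or because the surface becomes tangent to the vertical direction along $\partial M_{s_0}^+\subset\Pi_{s_0}$. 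At a contact point the reflected piece $\tau_{s_0}(M_{s_0}^+)$ and $M$ are tangent with normals aligned and one lying on one side of the other, so I would apply the interior or boundary version of the maximum principle (Theorem A, and its boundary analogue) to conclude they coincide near the contact point and hence, by connectedness, globally.

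The crucial point is that Theorem A applies to a hypersurface compared with \emph{its own reflection}: by Fact 2 the reflection $\tau$ reverses the sign of the second fundamental form and hence $\tau_{s_0}(M_{s_0}^+)$ is again $r$-minimal with an orientation compatible with $M$, while by Fact 1 the hypothesis $H_{r+2}\not=0$ guarantees that the principal curvature vectors of $M$ and of its reflection lie in the same leaf of $H_{r+1}=0$, so that the leaf and rank hypotheses of Theorem A are met. Because $\tau_{s_0}$ is an isometry, if $M$ agrees with $\tau_{s_0}(M_{s_0}^+)$ then $M$ is symmetric across $\Pi_{s_0}$; but a surface symmetric across $\Pi_{s_0}$ with $s_0>0$ would have matching asymptotic data on both sides of $\Pi_{s_0}$, contradicting that $\partial_\infty M$ is symmetric only across $\Pi_0$ (equivalently, that $\Gamma^+\subset\partial_\infty\Pi^+$ and $\Gamma^-\subset\partial_\infty\Pi^-$ are genuine graphs on opposite sides). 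This rules out $s_0>0$, so the reflection proceeds until $s=0$ and symmetry across $\Pi$ follows, whence each half $M\cap\Pi^\pm$ is a graph.

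The main obstacle I expect is controlling the behaviour of the moving reflection \emph{at infinity}, i.e. ensuring that the comparison between $\tau_s(M_s^+)$ and $M$ is not first violated out near the ideal boundary rather than at an interior point. This is exactly where the graph hypothesis on the ends and the precise description of $\Gamma^\pm$ as graphs over $\partial_\infty\h^n\times\{0\}$ must be used: one has to verify that, as $s$ decreases, the reflected ideal boundary $\tau_s(\Gamma^+)$ stays strictly below $\Gamma^-$ until $s=0$, so that no contact is forced at infinity and the first genuine contact is interior or along $\Pi_{s_0}$. Handling this asymptotic comparison carefully — presumably by using the slab confinement from Proposition \ref{key-lemma} together with the monotonicity of the graphs — is the delicate part, and it is where the argument departs from the Euclidean Schoen-type proof and mirrors the adaptations made in \cite{NST}.
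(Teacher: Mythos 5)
Your overall strategy coincides with the paper's: slab confinement via Proposition \ref{key-lemma}, Alexandrov reflection with the horizontal slices $\Pi_s$ started near the top of the slab where the graphical end makes the reflection admissible, and the maximum principle in the form of Theorem A together with Fact 1 (the hypothesis $H_{r+2}\neq 0$ keeps the principal curvature vector in one leaf) and Fact 2 (reflection across a slice flips the sign of the second fundamental form, so with the orientation $-\hat N$ the reflected piece is comparable to $M$). Your way of excluding a first contact at $s_0>0$ --- local coincidence propagates to global symmetry of $M$ about $\Pi_{s_0}$, which would force $\tau_{s_0}(\partial_\infty M)=\partial_\infty M$ and hence $s_0=0$ since the maximal height $t^+$ would have to reflect to the minimal height $-t^+$ --- is a legitimate variant of the mechanism the paper uses.

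There is, however, a genuine gap in your endgame. Excluding contact for every $s>0$ only yields, as $s\searrow 0$, the \emph{one-sided} conclusion that the reflected upper part $M_0^{+*}$ lies (weakly) above the lower part $M_0^-$; it does not yield symmetry. At $s=0$ the reflected ideal boundary $\tau_0(\Gamma^+)$ coincides with $\Gamma^-$, so the only forced ``contact'' is at infinity, where no maximum principle is available in this setting: coincidence of asymptotic boundaries plus a one-sided inequality does not give $M_0^{+*}=M_0^-$. The paper closes exactly this hole by running the Alexandrov reflection a \emph{second} time with slices coming from below, obtaining that $M_0^{-*}$ stays below $M_0^+$; the two one-sided inequalities combine to $M_0^{+*}=M_0^-$, i.e.\ symmetry about $\Pi$, and the graph property of each half $M\cap\Pi^\pm$ is obtained along the way, since Theorem A (and its boundary version) gives at each stage both the ordering and that the pieces $M_t^{\pm}$ are vertical graphs. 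So your sentence ``the reflection proceeds until $s=0$ and symmetry across $\Pi$ follows'' is a non sequitur as written, though it is repaired by the symmetric downward sweep. A smaller omission: before invoking Theorem A you need the normals of $M$ and its reflection to ``point in the same direction,'' and for that the paper first constructs a global orientation of $M$ (inward with respect to the compact region bounded by $M\cap C$ inside a cylinder $C$ over a compact $\Omega$ outside of which both ends are graphs, then extended continuously along the ends); your proposal should include this orientation step.
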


\begin{proof}

We denote by $t^+$ the highest $t$-coordinate of $\Gamma^+$.
Since $\partial_{\infty} M=\Gamma^+ \cup \Gamma^-,$ then
Proposition \ref{symmetry-lemma} imply that $M$ is contained in the slab between $\Pi_{t^+}$ and $\Pi_{-t^+}$.

 We now notice that since each end of $M$ is a vertical graph, we can obtain a compact domain $\Omega\in \h^n\times\{0\} $ such that $E^+$ and $E^-$ are graphs over $(\h^n\times\{0\})\backslash\Omega$. We consider the cylinder $C$ over $\Omega$ and we see that $M_C = M\cap C$ is compact and embedded, so it bounds a compact domain $B$. Then an argument similar to that used in the Corollary \ref{orientation-coro} gives that we can orient $M_C$ towards $B$. Since, $M\backslash M_C$ is a graph, we can extend the normal vector continuosly to $M$. In this case, we will say that the whole $M$ is oriented  towards the interior.

 For any $t>0$ we set $M_t^+=M \cap \Pi_t^+$. We denote by
$M_t^{+*}$ the symmetry of  $M_t^+$ with respect to the slice $\Pi_t$.
As $E^+$ is a vertical graph, there exists $\varepsilon >0$ such that  $M_{t^+ -\varepsilon}^+$
is a vertical graph, then  we can start Alexandrov reflection.  We keep doing the Alexandrov reflection with $\Pi_t$, doing $t\searrow 0$.  Here, we recall that reflection with respect to a slice preserves the orientation of $\h^n$ and reverses that of $\R$. Then,  taking Fact 2 into account, the principal curvature vector of   $M_t^{+*}$ with respect to  the suitable  orientation  $-\hat{N}$, is equal to the principal curvature vector of $M_t^{+}$ . By   Fact 1, the principal curvature vectors  of $M_t^{+*}$ and $M_t^-$ belong to the same leaf, hence  we can apply the maximum principle for comparing them.  Theorem A or its corresponding boundary version (Corollaries 1a and 1b of \cite{FS}), gives, for
$t>0$,  that the surface $M_t^{+*}$ stays above $M_t^-$ and that both, $M_t^-$ and $M_t^-$, are vertical graphs. By doing $t\searrow 0$, we obtain that $M_0^+$ is a vertical graph and
that
$M_0^{+*}$ stays above $M_0^-$.

Doing  Alexandrov reflection with slices coming from below,
one has that
$M_0^-$ is a vertical graph and that
$M_0^{-*}$ stays below $M_0^+$, henceforth we get
$ M_0^{+*}=M_0^{-}$.
Thus $M$ is symmetric with respect to $\Pi$ and each component of $M \setminus\Pi$ is
a graph.
This completes the proof.
\end{proof}

\begin{thm}
\label{uniqueness-cat}
Let $M$ be a  complete connected $r$-minimal hypersurface embedded in $\h^{n}\times\R$, $1\leq r<n-1$, with $H_{r+2}\not=0.$ Assume that $M$ has two ends and that each end is a vertical graph whose asymptotic boundary is a copy of $\partial_{\infty} \h^n.$
Then $M$ is isometric, by an ambient isometry to one of the ${\cal M}_a (r).$
\end{thm}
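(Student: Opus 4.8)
The plan is to reduce $M$ to a rotationally invariant hypersurface and then to read off the conclusion from the classification in Theorem \ref{classification-thm}. First I would normalize the configuration. Let $\tau^{\pm}$ be the heights of the two copies of $\partial_{\infty}\h^n$ forming $\partial_{\infty}M$. After a vertical translation (an ambient isometry) I may assume $\tau^+=-\tau^-=:\tau>0$, so that $\Gamma^-:=\partial_{\infty}E^-=\partial_{\infty}\h^n\times\{-\tau\}$ is exactly the reflection of $\Gamma^+:=\partial_{\infty}E^+=\partial_{\infty}\h^n\times\{\tau\}$ across $\Pi=\Pi_0$. Both $\Gamma^{\pm}$ are vertical graphs over $\partial_{\infty}\h^n\times\{0\}$, and $H_{r+2}\neq 0$ by hypothesis, so all the assumptions of Lemma \ref{symmetry-lemma} are met. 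That lemma then gives that $M$ is symmetric with respect to $\Pi$ and that each half $M\cap\Pi^{\pm}$ is a vertical graph; in particular $M$ has the shape of a catenoid, with a neck contained in $\Pi$ and two graphical ends closing up on $\Gamma^{\pm}$.

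Next I would run Alexandrov reflection with vertical hyperplanes to upgrade this to rotational symmetry. Fix a horizontal geodesic $\gamma$ of $\h^n$ through the origin and let $\{\pi_s\}_{s\in\R}$ be the foliation of $\h^n$ by the totally geodesic hyperplanes orthogonal to $\gamma$, with $\pi_0$ passing through the origin; write $\mathcal{P}_s=\pi_s\times\R$ for the associated vertical hyperplanes and $R_s$ for the reflection across $\mathcal{P}_s$. Since every isometry of $\h^n$ preserves $\partial_{\infty}\h^n$ and $R_s$ fixes heights, each $R_s$ leaves $\Gamma^+\cup\Gamma^-$ invariant. Starting with $s$ near $+\infty$ and letting $s$ decrease, I would reflect the part of $M$ lying on the far side of $\mathcal{P}_s$ and compare it with $M$; the graphical structure of the ends and the containment of $M$ in the slab force the reflected piece to stay on the near side of $M$ until a first contact occurs. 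At that contact $R_s(M)$ and $M$ are tangent and lie on one side of each other, which is exactly the situation of Theorem A: by Fact 2 the reflection reverses the second fundamental form so that the normals agree, and by Fact 1 (which uses $H_{r+2}\neq 0$ and connectedness) the principal curvature vectors of $M$ and $R_s(M)$ remain in the same leaf of $H_{r+1}=0$, so the same-leaf and rank hypotheses of Theorem A and of its boundary version hold. Hence $M=R_s(M)$. Because $\Gamma^+\cup\Gamma^-$ is invariant under every $R_s$ and is rotationally symmetric about the axis $\{0\}\times\R$, the reflections carried out from the two ideal endpoints $\gamma(\pm\infty)$ balance at $\pi_0$, so the symmetry hyperplane is $\mathcal{P}_0=\pi_0\times\R$, which contains the axis. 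Letting $\gamma$ range over all horizontal geodesics through the origin, $M$ becomes invariant under reflection across every vertical hyperplane through $\{0\}\times\R$; these reflections generate all rotations fixing the axis, so $M$ is rotationally invariant about $\{0\}\times\R$.

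With rotational invariance in hand, $M$ is a complete rotationally invariant $r$-minimal hypersurface, so Theorem \ref{classification-thm} applies: $M$ is either a slice or some ${\mathcal M}_a(r)$. A slice is excluded because its asymptotic boundary is a single copy of $\partial_{\infty}\h^n$ rather than two, and the right cylinder case of Theorem \ref{classification-thm} is excluded since $r+1<n$. Therefore $M={\mathcal M}_a(r)$ for some $a>0$, up to the vertical translation and rotation performed above, which together constitute the required ambient isometry.

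The delicate step is the moving hyperplane argument. Unlike the minimal case treated in \cite{NST}, the maximum principle for $H_{r+1}=0$ is not automatic: it demands both ellipticity and that the compared principal curvature vectors lie in the same leaf. Ellipticity is guaranteed by $H_{r+2}\neq 0$, while the same-leaf condition for $M$ and its reflection rests on Facts 1 and 2, hence again on $H_{r+2}\neq 0$ and on embeddedness, which is precisely why embeddedness must be assumed rather than deduced. A further subtlety is that $\Gamma^+\cup\Gamma^-$ is invariant under \emph{all} of the reflections $R_s$, so the stopping of the moving hyperplane cannot be detected from the asymptotic boundary and must instead be extracted from the interior or boundary tangency together with the balancing coming from the two ends, exactly as in \cite{NST}.
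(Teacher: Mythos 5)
Your argument tracks the paper's proof through its first two stages: the normalization making $\partial_\infty M$ symmetric about $\Pi$, the slab confinement via Proposition \ref{key-lemma}, the application of Lemma \ref{symmetry-lemma}, and the Alexandrov reflection with vertical hyperplanes $P_s$ orthogonal to a geodesic $\gamma$ (the paper's Claims 1 and 2, with Theorem A and Facts 1--2 supplying the leaf and ellipticity hypotheses, exactly as you say). The genuine gap comes at the next step, where you assert that ``the reflections carried out from the two ideal endpoints $\gamma(\pm\infty)$ balance at $\pi_0$, so the symmetry hyperplane is $\mathcal{P}_0=\pi_0\times\R$, which contains the axis.'' This is unjustified and, as stated, false: since $\Gamma^+\cup\Gamma^-=\partial_\infty\h^n\times\{\pm\tau\}$ is invariant under \emph{every} horizontal reflection $R_s$, nothing in the data distinguishes the origin, and a horizontal translate of a catenoid $\mathcal{M}_a(r)$ (which has exactly the same asymptotic boundary, because hyperbolic isometries extended trivially to $\R$ preserve heights and move the axis) satisfies all your hypotheses while its symmetry hyperplanes pass through the translated axis, not through $\pi_0$. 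Your own closing paragraph correctly observes that the stopping position cannot be read off from the asymptotic boundary, which contradicts the balancing claim in the body of your argument. The moving-plane argument only yields, for each geodesic $\gamma$, \emph{some} symmetry hyperplane $P_{\beta(\gamma)}$ whose position depends on $M$; with no control on the positions $\beta(\gamma)$, you cannot conclude that the reflections generate a rotation group about a common vertical axis, and the appeal to Theorem \ref{classification-thm} collapses.

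The paper closes this gap differently: from Claim 2 it deduces that the compact hypersurface $M\cap\Pi\subset\Pi$ admits a symmetry hyperplane orthogonal to every direction, so that \cite[Proposition 4.1]{NST} forces $M\cap\Pi$ to be a geodesic $(n-1)$-sphere; it then takes the rotational $r$-minimal hypersurface $\mathcal{M}_a(r)$ through $M\cap\Pi$ orthogonal to $\Pi$, notes that $M^+$ and $\mathcal{M}_a(r)^+$ are tangent along their common boundary sphere, and compares them by \emph{vertical translation}: lifting $M^+$ above $\mathcal{M}_a(r)^+$ and sliding it down to a first contact, where Theorem B applies (this is a comparison with a catenoid, not with a reflection of $M$, so the same-leaf hypothesis of Theorem A is unavailable and one instead uses Proposition \ref{H-j}, namely $H_j>0$ for $j<r+1$ and $H_{r+2}<0$ on $\mathcal{M}_a(r)$). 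Your proposal never invokes Theorem B or Proposition \ref{H-j}, and this final comparison is precisely what substitutes for the axis identification you assumed. Alternatively, your rotational-symmetry route could be salvaged by showing that all the hyperplanes $P_{\beta(\gamma)}$ pass through a common vertical line --- for instance, each reflection $R_{\beta(\gamma)}$ preserves the compact set $M\cap\Pi$, hence fixes its (unique) hyperbolic barycenter, so every symmetry hyperplane contains that point --- but some such argument must be supplied; as written, the step is missing.
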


\begin{proof}

Up to a vertical translation, we can assume that the asymptotic boundary of $M$  is
symmetric with respect to
$\Pi:= \h^n \times \{0\}$, say  $\partial_{\infty} M=\partial_{\infty} \h^n \times \{t_0,-t_0\}$ for some $t_0>0$. Then  $\Gamma^{+} :=\partial_{\infty} M \times \{t_0\}$ and $\Gamma^{-} :=\partial_{\infty} M \times \{-t_0\}$. By Proposition \ref{key-lemma}, $M$ is contained in the slab between $\Gamma^+$ and $\Gamma^-.$ By Lemma \ref{symmetry-lemma}, M is symmetric about $\Pi$,
and each connected component of
$M\setminus \Pi$ is a vertical graph. Moreover, at any point of $M\cap \Pi$, the tangent hyperplane to $M$ is orthogonal to $\Pi.$

 Since $M$ is
embedded, $M$ separates $\h^n \times [-t_0,t_0]$ into two connected components.
We denote by $U_1$ the component whose  asymptotic boundary is
$\partial_{\infty} \h^n \times  [-t_0,t_0]$ and by $U_2$ the component such that
$\partial_{\infty} U_2= \partial_{\infty} \h^n \times \{t_0,-t_0\}$.

Let $q_\infty \in \partial_{\infty} \h^n $ and let $\gamma \subset \h^n$ be an oriented
geodesic issuing from $q_\infty$, that is $q_\infty \in \partial_{\infty} \gamma$. Let
$q_0\in \gamma$ be any fixed point. For any $s\in \R$, we denote  by $P_s$ the vertical hyperplane orthogonal
to $\gamma$ passing through the point of $\gamma$ whose  oriented distance
 from  $q_0$ is $s$. We suppose that $s<0$ for any point in the
geodesic segment
$(q_0, q_\infty)$. For any $s\in \R$, we call $M_s (e)$ the part of $M \setminus P_s$
such that $(q_\infty, t_0) , (q_\infty, -t_0) \in \partial_{\infty} M_s(e)$ and let
 $M_s^* (e)$ be the reflection  of $M_s (e)$ about $P_s$.
We denote by
$M_s (d)$ the other part of $M\setminus P_s$ and by $M_s^* (d)$ its reflection
about $P_s$. We recall that this reflection preserves the orientation of $\R$ and reverses that of $\h^n$.  This, enable us to use Theorem A, as we did in Lemma \ref{symmetry-lemma}.

\smallskip

By assumption there exists $s_1<0$ such that for any $s<s_1$ the part
$M_s(e)$ has two connected components and both of them are vertical graphs.
We deduce that $\partial M_s(e)$ has two (symmetric) connected components, each
one being a vertical graph.

\

\noindent {\bf Claim 1.} For any $s<s_1,$ we have that $M_s^* (l)\cap \Pi^+$
stays above $M_s(d)$ and $M_s^* (e)\cap \Pi^-$ stays
below $M_s(d)$. Consequently  $M_s^* (e)\subset U_2$ for any $s<s_1$.

\

\noindent {\bf Claim 2.} We have $ M_\beta^* (e)= M_\beta(d)$. Thus,
given a geodesic $\gamma\subset \h^n$, there exists a vertical
hyperplane $P_\beta$ orthogonal to $\gamma$ such that $M$ is
symmetric with respect to $P_\beta$

 The reader can find analogous claims  joint with their proofs  in \cite[Theorem 4.2]{NST}.
The proofs go exactly in the same way. There, the authors use the classical maximum principle and here we should use Theorem A or its corresponding boundary version.

\

By Claim 2, one has that $M\cap \Pi$ satisfies the assumptions
of \cite[Proposition  4.1]{NST}.
Then $M\cap \Pi$ is a $(n-1)$-geodesic sphere of $\Pi$. Let $a$ be such that
 ${\cal M }_a (r)$ is the rotational $r$-minimal hypersurface through $M\cap \Pi$ and
orthogonal to $\Pi$.  We set
$ {\cal M }_a (r)^+ :={\cal M }_a (r) \cap \{ t>0\} $. Both $ {\cal M }_a (r)^+$ and $M^+$  are vertical along their common
finite boundary $\Sigma$, hence they are tangent along $\Sigma$. We want to show that they coincide. Let $t({\cal M }_a (r))$ (resp. $t(M)$) be the height of the asymptotic boundary
of $ {\cal M }_a (r)^+$ (resp. $M^+$). Suppose, for example, that $t( {\mathcal M}_a(r)) \leq t(M)$. We translate $M^+$ upward so that it stays above $ {\mathcal M}_a(r)^+$. Then we  translate it down till we find the first point of contact. By using Theorem B, or its corresponding boundary version, we conclude that $M^+={\cal M }_a (r)^+$.

The case  $t(M) \leq t({\cal M }_a (r))$ is analogous.  We then conclude that $M= {\cal M }_a (r)$ and the proof is completed.

\end{proof}

\

{\bf Acknowledgements.} The authors warmly thank Ricardo Sa Earp for very useful discussions.
The second author would like to thank the IM-UFRJ for the hospitality during the preparation of this work. The authors were partially supported by
PRIN-2010NNBZ78-009 and CNPQ-Brasil.

\textsc{Maria Fernanda Elbert}

{\em IM - Universidade Federal do Rio de Janeiro

fernanda@im.ufrj.br}

\textsc{Barbara Nelli}

{\em DISIM, Universit\'a dell'Aquila

nelli@univaq.it}

\textsc{Walcy Santos}

{\em IM - Universidade Federal do Rio de Janeiro

 walcy@im.ufrj.br}

\end{document}